\theoremstyle{plain}
\newtheorem{theorem}{Theorem}
\newtheorem{corollary}[theorem]{Corollary}
\newtheorem{lemma}[theorem]{Lemma}
\newtheorem{proposition}[theorem]{Proposition}
\theoremstyle{definition}
\theoremstyle{remark}
\newtheorem{remark}[theorem]{Remark}
\newtheorem*{remark*}{Remark}
\newcommand\CC{{\mathbf C}}
\newcommand\RR{{\mathbf R}}
\newcommand\DD{{\mathbf D}}
\newcommand\BB{{\mathbf B}}
\newcommand\spr[1]{\langle#1\rangle}
\newcommand\hol{_{\text{hol}}}
\renewcommand\Re{\operatorname{Re}}
\newcommand\oz{\overline z}
\newcommand\pOm{{\partial\Omega}}
\newcommand\oOm{{\overline{\Omega}}}
\newcommand\dbar{\overline\partial}
\newcommand\ok{{\overline k}}
\newcommand\LL{\mathcal L}
\newcommand\HH{{\mathbf H}}
\newcommand\MM{{\mathbf M}} %[
\newcommand\nuj{(0,1]} %)
\newcommand\Ric{\operatorname{Ric}}
\newcommand\dist{\operatorname{dist}}
\newcommand\pam{{\partial\MM}}
\newcommand\od{\overline d}
\newcommand\oZ{\overline Z}
\newcommand\rc{{\rho(c)}}
\newcommand\ox{\overline x}
\begin{document}

\title[Balanced metrics]{Radial balanced metrics on the unit ball
 of the Kepler manifold}
\author[H.~Bommier-Hato]{H\'el\`ene Bommier-Hato}
\address{Faculty of Mathematics, University of Vienna, Oskar-Morgenstern-Platz~1,
 1090~Vienna, Austria}
% Aix-Marseille Universit\'e, I2M UMR CNRS~7373,
% 39~Rue F.~Joliot-Curie, 13453~Marseille Cedex~13, France
\email{helene.bommier@gmail.com}
\thanks{H.~Bommier was supported by the FWF project P~30251-N35.}
\author[M.~Engli\v s]{Miroslav Engli\v s}
\address{Mathematics Institute, Silesian University in Opava,
 Na~Rybn\'\i\v cku~1, 74601~Opava, Czech Republic {\rm and }
 Mathematics Institute, \v Zitn\' a 25, 11567~Prague~1,
 Czech Republic}
\email{englis{@}math.cas.cz}
\thanks{Research supported by GA\v CR grant no.~16-25995S
 and RVO funding for I\v CO~67985840.}
\author[E.-H.~Youssfi]{El-Hassan Youssfi}
\address{Aix-Marseille Universit\'e, I2M UMR CNRS~7373,
 39 Rue F-Juliot-Curie, 13453~Marseille Cedex 13, France}
\email{el-hassan.youssfi@univ-amu.fr}
\subjclass{Primary 32A36; Secondary 32Q15, 53C55, 32F45}
% 30-XX Functions of a complex variable {For analysis on manifolds, see 58-XX}
%  30Fxx Riemann surfaces
%   30F45 Conformal metrics (hyperbolic, Poincare, distance functions)
% 32-XX	Several complex variables and analytic spaces
%  32Axx Holomorphic functions of several complex variables
%   32A25 Integral representations; canonical kernels (Szego, Bergman, etc.)
%   32A36 Bergman spaces
%  32Fxx Geometric convexity
%   32F45 Invariant metrics and pseudodistances
%  32Qxx Complex manifolds
%   32Q15 Kahler manifolds
%  32Txx Pseudoconvex domains
%   32T15 Strongly pseudoconvex domains
% 46-XX Functional analysis
%  46Exx Linear function spaces and their duals
%   46E22 Hilbert spaces with reproducing kernels
% 53-XX Differential geometry
%  53Bxx Local differential geometry
%   53B35 Hermitian and Kahlerian structures
%  53Cxx Global differential geometry
%   53C55 Hermitian and Kahlerian manifolds
\keywords{Balanced metric; Bergman kernel; Kepler manifold}
\begin{abstract}
We~show that there is no radial balanced metric on the unit ball of the
Kepler manifold with not too wild boundary behavior. Additionally,
we~identify explicitly the weights corresponding to radial metrics
with such boundary behavior which satisfy the balanced condition as
far as germs at the boundary are concerned.
Related results for Poincar\'e metrics are also established.
\end{abstract}

\maketitle

\section{Introduction}\label{sec1}
Let $\Omega$ be a domain in $\CC^n$, $n\ge1$, or on an $n$-dimensional
complex manifold, and $\Phi$ a strictly plurisubharmonic function on $\Omega$
with the associated K\"ahler form $\omega=\frac i2\partial\dbar\Phi$ and
volume element $\wedge^n\omega$. The~weighted Bergman space $L^2\hol(\Omega,
e^{-(n+1)\Phi}\hbox{$\wedge^n\omega$})$ of all holomorphic functions on $\Omega$
square integrable with respect to $e^{-(n+1)\Phi}\wedge^n\omega$ is well known
to have bounded point evaluations and hence possesses a reproducing kernel
$K_{e^{-(n+1)\Phi}\wedge^n\omega}(x,y)$ (a~weighted Bergman kernel).
The~K\"ahler metric associated to~$\omega$ --- or,~abusing terminology,
the function~$u:=e^{-\Phi}$ ---  is~called \emph{balanced}~if
\begin{equation}
 K_{e^{-(n+1)\Phi}\wedge^n\omega}(z,z) = c e^{(n+1)\Phi(z)}
 \qquad\forall z\in\Omega,  \label{tTA}
\end{equation}
that~is,
\begin{equation}
 K_{u^{n+1}\wedge^n(\frac i2\partial\dbar\log\frac1u)}(z,z)
 = \frac c{u(z)^{n+1}} \qquad\forall z\in\Omega   \label{tTB}
\end{equation}
for some constant~$c$. (One~can check that this condition indeed depends
only on the K\"ahler form~$\omega$, not on its potential $\Phi$ or,
equivalently, on $u=e^{-\phi}$.)

The~notion extends in an obvious way also to the more general setting of
functions replaced by sections of line bundles: namely, if~$\LL$ is a
holomorphic Hermitian line bundle over $\Omega$ with K\"ahler connection
$\nabla$ such that $\operatorname{curv}\nabla=\omega$, let
$L^2\hol(\LL^*,\wedge^n\omega)$ be the Bergman space of all square-integrable
holomorphic sections of its dual budle~$\LL^*$, and for~any orthonormal
basis $\{s_j\}$ of this space, set
\begin{equation}  \epsilon(x) := \sum_j \|s_j(x)\|_x^2 .   \label{tTC}
\end{equation}
One~can again show that $\epsilon(x)$ does not depend on the choice of
the orthonormal basis $\{s_j\}$ and also does not depend on the line
bundle $\LL$ but only on the K\"ahler form~$\omega$. The~K\"ahler form,
or~the associated K\"ahler metric, is~called \emph{balanced}~if
\begin{equation}  \epsilon \equiv \text{const.}    \label{tTD}
\end{equation}
When $\LL$ is trivial, its~sections can be identified with functions
on~$\Omega$, and one recovers the situation from the previous paragraph.

The function $\epsilon$ and the condition \eqref{tTD} have appeared in
the literature under different names, cf.~Rawnsley \cite{Raw}, Cahen,
Gutt and Rawnsley~\cite{CGR}, Kempf~\cite{Kempf} and Ji~\cite{Ji},
or~Zhang~\cite{ZhaS}; the term \emph{balanced} was first used by
Donaldson~\cite{Donald}, who also established the existence of such
metrics on any (compact) projective K\"ahler manifold with constant
scalar curvature. Subsequent studies of the existence and uniqueness of
balanced metrics in the compact case include Seyyedali~\cite{Seyd},
Li~\cite{ChiLi}, and others; see~also Phong and Sturm \cite{PhoStu}
for an overview.

However, despite the extensive studies of the compact case, much less
seems to be known concerning existence and uniqueness of balanced metrics
in the noncompact setting of domains in~$\CC^n$ or on complex manifolds.
Beside the simplest example, which is the Bergman metric on the unit ball
$\BB^n$ of~$\CC^n$, corresponding~to
\begin{equation} u(z)=(1-|z|^2)^\alpha, \quad\alpha>\tfrac n{n+1},
 \label{tTE} \end{equation}
the only known examples of balanced metrics are the
(appropriate multiples of~the) Bergman metrics on bounded symmetric
domains in~$\CC^n$, or, more generally, of~invariant metrics on bounded
homogeneous domains; and the flat (Euclidean) metric on~$\CC^n$
(with $\Phi(z)=|z|^2$). Miscellaneous partial results concerning uniqueness
and/or existence of balanced metrics on domains are due to Loi~and
Zedda~\cite{LoiZe}, Cuccu and Loi \cite{LoiX}, Greco and Loi \cite{LoiY},
Arezzo and Loi~\cite{ArLoi}, or the present authors \cite{BEY}.
It~is a conjecture of the author's \cite{EKokyu} that for
$\Omega\subset\CC^n$ bounded strictly pseudoconvex with smooth boundary
and any $\alpha>n$, there exists a unique balanced metric
on~$\Omega$ with $u^{(n+1)/\alpha}$ vanishing precisely to the first order
at the boundary~$\pOm$, i.e.~$u(z)\asymp\dist(z,\pOm)^{\alpha/(n+1)}$.
For $\alpha=n+1$ and $u$ radial on the unit disc $\DD=\{z\in\CC:|z|<1\}$, %(
i.e.~$u(z)=f(|z|^2)$ for some $f\in C^\infty[0,1)$, %]
this problem was considered in~\cite{EbD}, where it was shown that among
all $f$ with sufficiently nice boundary behavior, the~only one giving rise
to a balanced metric is~\eqref{tTE}, i.e.~$f(t)=1-t$.

In~this paper, we~consider the above problem, again with $\alpha=n+1$,
in the setting of the unit disc replaced by the unit ball
$\MM=\MM^n:=\{z\in\HH:|z|<1\}$ of the Kepler manifold
\begin{equation}
 \HH=\HH^n:=\{z\in\CC^{n+1}:z\cdot z=0,\; z\neq0\},
  \qquad n\ge2  \label{tTF}
\end{equation}
(here and throughout, $z\cdot w:=\sum_j z_j w_j$).
The~latter is an $n$-dimensional complex submanifold in~$\CC^{n+1}$,
which can be identified as a symplectic manifold with the cotangent bundle
(minus its zero section) of~the unit sphere $\mathbf S^n\subset\RR^{n+1}$.
The~origin is a removable singularity for~$\HH$, i.e.~$\HH\cup\{0\}$
is a normal complex analytic space, and in fact is the simplest example
of Jordan-Kepler varieties \cite{EUjk} which generalize the classical
determinantal varieties. If~the conjecture in the previous paragraph
is~valid, any~balanced metric on $\MM$ has to be rotation-invariant,
hence we will again be looking for $u=e^{-\Phi}$ in the form %(
$u(z)=f(|z|^2)$ for some $f\in C^\infty[0,1)$ which vanishes precisely %]
to the first order at~$1$, i.e.~$f(1^-)=0$ and $f'(1^-)\neq0$; replacing
$f$ by a suitable multiple thereof, we~may assume that $f'(1^-)=-1$.
As~in \cite{EbD} for the disc, our~strategy will be to look at the
boundary behavior of both sides of~\eqref{tTB}. Our~main results are
the following; for~simplicity they are formulated for the simplest
case $n=2$, but~the methods carry over to general~$n$.

\begin{theorem}\label{pA}
Assume that $f\in C^\infty\nuj$ satisfies $f(1)=0$, $f'(1)=-1$ and there is
a constant $c\neq0$ such that
\begin{equation}
 \frac c{u(z)^3} - K_{u^3\wedge^2(\frac i2\partial\dbar\log\frac1u)}(z,z),
 \qquad u(z):=f(|z|^2),  \label{tTG}
\end{equation}
is smooth on~$\MM^2$ up~to $|z|=1$. Then
\begin{equation}
 t f' (f f'+t f f''-t f^{\prime2}) = \phi_v + h ,  \label{tTH}
\end{equation}
where $h\in C^\infty\nuj$ satisfies $h^{(k)}(1)=0$ $\forall k$, and
\begin{align}
\phi_v(t) &= e^{L/4}\Big(\cosh\frac{L\sqrt v}4
  - \frac1{\sqrt v}\sinh\frac{L\sqrt v}4\Big),
  \qquad L:=\log\frac1t, \nonumber \\
&= \frac{(1+\sqrt v)t^{(-1+\sqrt v)/4}-(1-\sqrt v)t^{(-1-\sqrt v)/4}}
 {2\sqrt v}  \label{tTZ}
\end{align}
for some $v\in\RR$.
\end{theorem}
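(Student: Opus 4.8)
The plan is to compute the asymptotic expansion of the weighted Bergman kernel on the diagonal, $K_{u^3\wedge^2(\frac i2\partial\dbar\log\frac1u)}(z,z)$, as $|z|\to1$, and match it against $c/u(z)^3$. Since $u(z)=f(|z|^2)$ is radial, the Kähler form $\omega=\frac i2\partial\dbar\log\frac1u$ and its volume element $\wedge^2\omega$ are rotation-invariant on $\MM^2$, so the Bergman space decomposes into homogeneous (weighted-degree) components and the diagonal kernel becomes a sum $\sum_m c_m^{-1}|z|^{2m}$-type expression, where each $c_m$ is a one-dimensional radial integral over $(0,1)$ against the weight $u^3\wedge^2\omega$. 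First I would make this reduction explicit, writing the radial weight in terms of $f$ and its derivatives: the volume element $\wedge^2\omega$ picks up a factor built from $f,f',f''$, and the key combination that emerges is exactly the left-hand side of \eqref{tTH}, namely $tf'(ff'+tff''-tf'^{\prime2})$ — I expect this to be the radial Monge–Ampère/determinant factor of the metric, up to the $u^3=f^3$ weighting and a power of $|z|^2$ coming from the ambient geometry of $\HH^n$.

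The heart of the argument is a boundary asymptotics computation. Near $t=1$ write $t=1-s$ with $s\to0^+$; since $f(1)=0$, $f'(1)=-1$, $f$ vanishes to first order, so $f(t)\sim s$ and the weight degenerates at the boundary in a controlled way. I would apply the Forelli–Rudin / Laplace-type machinery (as in \cite{EbD} for the disc) to extract the large-$m$ behavior of the moments $c_m$, and then resum to obtain the boundary behavior of the diagonal kernel. The hypothesis that \eqref{tTG} is smooth up to $|z|=1$ forces the leading boundary singularities of $K$ and of $c/u^3=c/f^3$ to cancel to all orders. Equating the singular parts order by order should yield an ordinary differential equation (or a recursion that integrates to an ODE) for the combination $\Psi(t):=tf'(ff'+tff''-tf'^{\prime2})$, modulo a remainder $h$ that is flat at the boundary (i.e.\ $h^{(k)}(1)=0$ for all $k$, absorbing the "$+h$" ambiguity inherent in matching only the asymptotic, non-flat part).

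I would then solve the resulting ODE for $\Psi$. The exponential/hyperbolic closed form in \eqref{tTZ}, with $L=\log\frac1t$ and the characteristic exponents $(-1\pm\sqrt v)/4$, strongly signals a second-order linear ODE with constant coefficients \emph{after} the substitution $L=\log\frac1t$ (equivalently $t=e^{-L}$): the two fundamental solutions $t^{(-1\pm\sqrt v)/4}$ correspond to the roots $r_\pm=(-1\pm\sqrt v)/4$ of a quadratic $16r^2+8r+(1-v)=0$, and the specific linear combination in $\phi_v$ is pinned down by the normalization $f(1)=0$, $f'(1)=-1$, i.e.\ the initial data at $L=0$ ($t=1$). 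Concretely I would recast the matching condition as $\ddot\Psi+\tfrac12\dot\Psi+\tfrac{1-v}{16}\Psi=0$ in the variable $L$ (dots denoting $d/dL$), verify that $\phi_v$ solves it with the correct normalization, and identify $v\in\RR$ as the single free spectral parameter left undetermined by the local boundary matching.

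The main obstacle I anticipate is the bookkeeping in the Bergman-kernel asymptotics: on the Kepler manifold $\HH^n$ the radial measure is not the flat one, so computing $\wedge^2\omega$ and the weighted moments requires the correct intrinsic volume element of $\MM^2$ and a careful Laplace analysis of integrals whose weight degenerates like $s\,(\log\frac1s)^{?}$ at the boundary. Controlling the \emph{all-orders} cancellation — not merely the leading term — so as to reduce everything cleanly to the single ODE above (with the flat remainder $h$ genuinely absorbed), is where the real work lies; the final ODE solution itself is then routine.
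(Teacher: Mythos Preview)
Your overall strategy --- reduce to the radial weight $\phi=W[f]=tf'(ff'+tff''-tf'^{\prime2})$, compute the moments $c_k=\int_0^1 t^k\phi(t)\,dt$, and analyze the boundary asymptotics of $F(t)=\sum_k(2k+1)c_k^{-1}t^k$ against $c/f^3$ --- matches the paper. But the mechanism by which the matching produces constraints is not the one you describe, and your proposed endpoint (a second-order ODE for $\Psi=\phi$ obtained by ``equating singular parts order by order'') does not arise in that way. Both $F$ and $c/f^3$ have pole-type singularities $\sim(1-t)^{-3}$, and matching those powers gives only finitely many conditions. The infinitely many constraints come from a different source: writing $1/c_k\approx(k+1)\sum_m A_m(k+1)^{-m}$ and resumming, one gets $F$ as a combination of Lerch functions $\Phi(t,m,1)$; by Lerch's formula, each $\Phi(t,m,1)$ with $m\ge1$ contributes a term $L^{m-1}\log L$ (with $L=\log\tfrac1t$). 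Since $f\in C^\infty(0,1]$, the function $c/f^3$ has \emph{no} $\log L$ terms in its asymptotic expansion at $t=1$. Smoothness of $F-c/f^3$ therefore forces the coefficient $2A_{m+2}-A_{m+1}$ of each $\log L$ term to vanish, i.e.\ $A_m=2^{2-m}A_2$ for all $m\ge2$. This is a recursion on the reciprocal-moment coefficients $A_m$, not on $\phi$ itself; the closed form \eqref{tTZ} is found a~posteriori by checking which $\phi$ produces exactly these~$A_m$.

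There is also a second step you have not anticipated. The recursion above leaves $A_1$ and $A_2$ free, whereas \eqref{tTZ} is a one-parameter family; one still has to show $A_1=0$. This is not a singularity-matching statement but a consistency (bootstrap) argument: once the $A_m$ are known, $F$ is determined up to a smooth remainder, hence so is the germ of $f$ via $f=(c/(F-H))^{1/3}$; recomputing $\phi=W[f]$ from this $f$ gives $a_1=-\tfrac{2A_1}3$, while the relation between the $a_j$ and the $A_m$ gives $a_1=-A_1$, forcing $A_1=0$. Your sketch does not contain this feedback step, and without it the parameter count does not close. (As a minor point, your proposed ODE in $L$ has the wrong sign on the first-order term: $t^{(-1\pm\sqrt v)/4}=e^{(1\mp\sqrt v)L/4}$, so the characteristic roots in $L$ are $(1\mp\sqrt v)/4$ and the equation is $\ddot\Psi-\tfrac12\dot\Psi+\tfrac{1-v}{16}\Psi=0$.)
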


Here for $v=0$, \eqref{tTZ} is to be interpreted as the limit $v\to0$,
i.e.~as $e^{L/4}(1-\frac L4)=t^{-1/4}(1+\frac14\log t)$. Note also that
the right-hand side of \eqref{tTZ} remains unchanged upon replacing
$\sqrt v$ by $-\sqrt v$, so~there is no ambiguity connected with the
choice of the square root~$\sqrt v$.
% (Notice~also that $\phi_v\equiv1$ for $v=1$.)

Finding balanced metrics on $\MM$ thus reduces to looking for solutions
of the differential equation~\eqref{tTH}.

\begin{corollary}\label{pB} If $u(z)=f(|z|^2)$ on $\MM^2$ with $f(1)=0$,
$f'(1)=-1$ and $f$ real-analytic at~1, then the balanced condition
\eqref{tTB} is never satisfied. That~is, there exists no balanced $u$
which would be real-analytic up to the boundary $|z|=1$. \end{corollary}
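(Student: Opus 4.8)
The plan is to feed the balanced hypothesis into Theorem~\ref{pA} and then use real-analyticity to remove the flat remainder. If~\eqref{tTB} holds, then the expression in~\eqref{tTG} is identically zero, hence trivially smooth on $\MM^2$ up to $|z|=1$, so Theorem~\ref{pA} applies and gives $tf'(ff'+tff''-tf'^2)=\phi_v+h$ for some $v\in\RR$ and some $h\in C^\infty\nuj$ with $h^{(k)}(1)=0$ for all~$k$. Since $f$ is real-analytic at~$1$, so is the left-hand side; and $\phi_v$ is real-analytic at~$t=1$, being a fixed combination of the powers $t^{(\pm\sqrt v-1)/4}$, which are analytic away from the origin. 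Hence $h$ is real-analytic at~$1$ and simultaneously flat there, and a real-analytic function all of whose derivatives vanish at a point vanishes identically near it. Thus $h\equiv0$ on a one-sided neighborhood of~$1$, and we are reduced to the exact identity $tf'(ff'+tff''-tf'^2)=\phi_v$ there.

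I~would then expand both sides in powers of $1-t$ and match coefficients. Writing $f(t)=(1-t)+\sum_{k\ge2}a_k(1-t)^k$, the constant and linear terms being already fixed by $f(1)=0$ and $f'(1)=-1$, and expanding $\phi_v$ most conveniently through the first line of~\eqref{tTZ} (note that $L=\log(1/t)$ is itself analytic in $1-t$ and vanishes at $t=1$), the reduced equation becomes a hierarchy of scalar relations, one at each order in $1-t$. The orders $0$ and~$1$ should be automatic and should force $a_2=\tfrac14$, that~is $f''(1)=\tfrac12$; the next relation should determine $a_3$ in terms of~$v$; and so on. The aim is to run this recursion until it can no longer be solved, thereby excluding every real~$v$.

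The step I expect to be the main obstacle is locating and justifying that incompatibility. The recursion is governed by an indicial-type factor multiplying the highest unknown at each order, and the first few consistency relations appear to hold identically --- precisely because $\phi_v$ was built to reproduce the germ of the balanced condition at the boundary --- so the contradiction is not visible at the lowest orders, and a naive match seems to keep succeeding, suggesting that the reduced germ-equation already admits formal real-analytic solutions. Any genuine obstruction must therefore lie deeper. The real work is then either to push the expansion to the order where the real-analytic Ansatz first clashes with the one-parameter family $\phi_v$ for all $v\in\RR$ at once, or --- what I suspect is cleaner --- to exploit that the balanced condition~\eqref{tTB} demands the \emph{exact} vanishing of~\eqref{tTG}, and not merely of the non-flat part controlled by~\eqref{tTH}. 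Quantifying the residual flat discrepancy of the germ-solutions, and showing it cannot vanish, is where I expect the essential difficulty to reside.
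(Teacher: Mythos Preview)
Your first paragraph is correct and coincides with the opening move of the paper's proof: real-analyticity of $f$ forces $h\equiv0$, so one is left with the exact identity $W[f]=\phi_v$ together with the full balanced relation.

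From that point on, however, your proposal is not a proof but a description of where you expect the difficulty to lie, and your suspicion is in fact correct that the Taylor-matching approach at $t=1$ is a dead end. The equation $tf'(ff'+tff''-tf'^2)=\phi_v$ with $f(1)=0$, $f'(1)=-1$ is a second-order ODE whose right-hand side is real-analytic at $t=1$; unwinding $W[f]$ shows that the highest-order coefficient $f''$ enters linearly with nonvanishing factor near $t=1$, so the equation has a unique convergent real-analytic solution there for \emph{every} real $v$. No amount of pushing the expansion in $1-t$ will ever produce an inconsistency, because there is none at the germ level.

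The paper's argument bypasses this entirely by exploiting that $\phi_v$ is known \emph{globally and explicitly}. First, for $v<0$ the function $\phi_v$ oscillates in sign on $(0,1)$, contradicting the fact that $\phi(|z|^2)\,d\rho$ must be a nonnegative volume element. For $v\ge0$, since $\phi_v$ is a fixed linear combination of two powers of $t$, the moments $c_k=\int_0^1 t^k\phi_v\,dt$ are computable in closed form; this makes $F(t)=\sum_k(2k+1)t^k/c_k$ an explicit rational function of $t$, and the balanced condition $c/f^3=F$ then determines $f$ explicitly. One finally checks, by analyzing the behavior of this explicit $f$ at $t=0$ (and by a direct substitution in the borderline case $v=1$), that it does \emph{not} satisfy $W[f]=\phi_v$. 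The contradiction is thus global, coming from the other end of the interval, not from the germ at $t=1$; this is precisely the ``exact vanishing'' route you gesture at in your last sentence, but the concrete mechanism---closed-form moments of $\phi_v$ yielding a closed-form candidate $f$ that can then be tested---is what is missing from your proposal.
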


The~last corollary is in contrast with the situation for the disc,
where $f(t)=1-t$ gives a balanced metric. We~also remark that for the
disc in~\cite{EbD}, we~were able to identify explicitly the functions
satisfying~\eqref{tTG}; this we are unable to offer here, though some
observations are presented in Section~\ref{sec52} below.
This also explains why a different method is needed here than in~\cite{EbD}:
we~first identify the weights $u^3\wedge^2(\frac i2\partial\dbar\log\frac1u)$,
which actually turn out to be given by the formula~\eqref{tTH},
and then are able to draw conclusions even without explicit knowledge
of the solutions.
% For~this reason, furthermore, a~different method of proof is needed
% here than in~\cite{EbD}.

An~analogue of Theorem~\ref{pA} can be proved also for the disc, yielding
another (though not much simpler) proof of Theorem~1 in~\cite{EbD}.
(We~pause to note that in~\cite{EbD}, the~condition that $u(z)^{n+1}
K_{u^{n+1}\wedge^n(\frac i2\partial\dbar\log\frac1u)}(z,z)-c$ be smooth
up to the boundary and vanish to second order there was used instead of
the smoothness of \eqref{tTG} up to the boundary; our~Theorem~\ref{pA}
remains in force also for this modification, with the same proof.)

Finally, similarly as for the disc, the hypothesis of the smoothness
of~$f$ at~$t=1$ in Theorem~\ref{pA} can be weakened considerably:
writing temporarily for brevity $r(z):=\dist(z,\pOm)$, assume that
$u\in C^\infty(\Omega)$ has an asymptotic expansion at $\pOm$ of the form
\begin{equation}  u(z) \approx r(z) \sum_{k=0}^\infty \sum_{j=0}^{M_k}
 a_{kj}(z) r(z)^k (\log r(z))^j,   \label{tTI}   \end{equation}
with some nonnegative integers $M_k$ and functions
$a_{kj}\in C^\infty(\oOm)$, where
\begin{equation}
  M_0=0 \qquad\text{and}\qquad a_{00}=1\text{ on }\pOm.  \label{tTJ}
\end{equation}
Here (\ref{tTI}) means that $u$ differs from the partial sum
$\sum_{k=0}^{N-1}$ of the right-hand side by a function in $C^N(\oOm)$
all of whose partial derivatives up to order $N$ vanish at~$\pOm$,
for all $N=0,1,2,\dots$.
Note that $u\in C^\infty(\oOm)$ is equivalent to $M_k=0$~$\forall k$.

\begin{theorem}\label{pC} Assume that $u(z)=f(|z|^2)$ is a smooth radial
function on~$\MM^2$, with asymptotic expansion \eqref{tTI}
satisfying~\eqref{tTJ}, for which \eqref{tTG} is smooth up to~$|z|=1$.
Then $f\in C^\infty\nuj$ $($and,~hence, Theorem~\ref{pA} applies$)$.
\end{theorem}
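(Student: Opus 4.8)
The plan is to reduce the two hypotheses to a one–variable statement about the boundary expansion of $f$, and then to show that the smoothness of \eqref{tTG} propagates through the Bergman--kernel computation in such a way that any logarithm in $f$ would produce an uncancellable logarithm in \eqref{tTG}. First I would pass to a purely radial, one–dimensional picture. Since $u$, and hence $r(z)=\dist(z,\pOm)$, are invariant under the rotation group acting on $\MM^2$, I may average the coefficients $a_{kj}$ of \eqref{tTI} over this group and thereby assume them radial, i.e.\ smooth functions of $t=|z|^2$; likewise $r=\rho(t)$ for a smooth $\rho$ with $\rho(1)=0$, $\rho'(1)\neq0$. Writing $\rho(t)^k=(1-t)^k\cdot(\text{smooth})$ and $\log\rho(t)=\log(1-t)+(\text{smooth})$ and re-expanding, \eqref{tTI}--\eqref{tTJ} become a polyhomogeneous expansion
\begin{equation*}
 f(t)\approx\sum_{k=1}^\infty\sum_{j=0}^{N_k}c_{kj}\,(1-t)^k\big(\log(1-t)\big)^j
\end{equation*}
at $t=1$ (in the flat-remainder sense of \eqref{tTI}), with $N_1=0$ and $c_{10}=-f'(1)\neq0$. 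Thus $f\in C^\infty\nuj$ is exactly the assertion that $N_k=0$ for all $k$, and the theorem reduces to ruling out logarithms in this expansion.

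Next I would feed this expansion into the diagonal kernel. The weight $w(t):=u^3\wedge^2(\frac i2\partial\dbar\log\frac1u)$ is a universal smooth expression in $f,f',f''$, so the expansion of $f$ induces a polyhomogeneous expansion of $w$ at $t=1$ whose logarithmic degrees are governed by the $N_k$. Decomposing $L^2\hol(\MM^2,w)$ under the circle action $z\mapsto e^{i\theta}z$ exactly as in the proof of Theorem~\ref{pA}, the diagonal kernel takes the form $K(z,z)=\sum_m d_m\,t^m/I_m$, where $d_m$ are the (positive) dimensions of the spaces of degree-$m$ homogeneous holomorphic functions on $\HH^2$ and $I_m=\int_0^1 t^m\,w(t)\,\sigma(t)\,dt$ for a fixed smooth density $\sigma$. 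The computation producing the boundary asymptotics of $K(z,z)$ in the proof of Theorem~\ref{pA} is termwise in the boundary expansion of $w$, hence applies verbatim here: a Watson/Laplace analysis shows that a term $(1-t)^k(\log(1-t))^j$ in $w\sigma$ contributes to $I_m$ an asymptotic term of size $m^{-(k+1)}$ times a polynomial of degree $j$ in $\log m$, and the subsequent resummation over $m$ converts each $\log m$ back into a $\log(1-t)$. The net effect is a polyhomogeneous expansion of $K(z,z)$ whose logarithmic part, at each order in $(1-t)$, is a definite linear functional of the coefficients $c_{kj}$ of $f$; crucially, this functional is triangular in $k$ with a \emph{nonzero} leading factor coming from the Watson lemma.

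Finally I would run an order–by–order induction. On the other side, $c/u^3=c/f^3$ has leading order $(1-t)^{-3}$ and, being an algebraic (derivative–free) function of $f$, inherits a polyhomogeneous expansion whose logarithmic coefficients at each order are again triangular in the $c_{kj}$ with nonzero leading factor. The hypothesis that \eqref{tTG} is smooth up to $|z|=1$ says precisely that the logarithmic parts of $c/f^3$ and of $K(z,z)$ cancel identically. Suppose, for contradiction, that some $N_k\ge1$, and let $k^*$ be the smallest such $k$. Collecting in \eqref{tTG} the coefficient of the highest surviving power $(\log(1-t))^{N_{k^*}}$ at the order where it first appears, the contributions from $c/f^3$ and from $K(z,z)$ are each nonzero multiples of $c_{k^*,N_{k^*}}$, and one checks that their multipliers are distinct; hence they cannot cancel unless $c_{k^*,N_{k^*}}=0$, contradicting $N_{k^*}\ge1$. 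Therefore all $N_k$ vanish, $f$ has a pure power–series expansion at $t=1$, i.e.\ $f\in C^\infty\nuj$, and Theorem~\ref{pA} applies. Equivalently, one may repackage the conclusion as the statement that the identity \eqref{tTH} persists for polyhomogeneous $f$, and that, since its right-hand side $\phi_v+h$ is manifestly free of logarithms at $t=1$, the differential polynomial on its left-hand side must be log-free, which by its structure forces $f$ itself to be log-free.

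\textbf{The main obstacle} is the middle step: establishing the Watson/Laplace transfer with logarithmic integrands and, above all, verifying that the induced map on logarithmic coefficients is triangular with nonvanishing diagonal, so that logarithms from $c/f^3$ and from $K(z,z)$ cannot conspire to cancel while a logarithm remains in $f$. This requires tracking both the $m\to\infty$ asymptotics of $I_m$ (where logarithms in $w$ generate $\log m$ factors) and the subsequent $t\to1$ resummation (where $\log m$ factors regenerate $\log(1-t)$), and checking that the two transforms compose to an invertible operation on each graded piece.
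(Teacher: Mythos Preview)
Your overall strategy coincides with the paper's: reduce to a one-variable polyhomogeneous expansion of $f$ at $t=1$, push it through $\phi=W[f]$, the moments $c_k$, and the resummation into $F$, and compare the logarithmic coefficients of $F$ with those of $c/f^3$. The gap is precisely where you flag it, and your proposed resolution is incorrect: it is \emph{not} true that at the first order $k^*$ where a log appears, the contributions of $c/f^3$ and of $K(z,z)$ to the top log power are nonzero multiples of $c_{k^*,N_{k^*}}$ with distinct multipliers.

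Concretely (in the paper's variable $L=\log\tfrac1t$, with $N:=k^*$ and $M:=N_{k^*}$): the top log coefficient of $\phi=W[f]$ at order $L^N$ is $(3-N)(N+1)a_M$. For $N=3$ this vanishes identically, so $F$ carries no $(\log)^M$ term at that order while $1/f^3$ does; your ``distinct nonzero multipliers'' claim fails, and one must descend to the $(\log)^{M-1}$ coefficient (and, when $M=1$, even further). For $N=2$ the top log coefficients of $F$ and $1/f^3$ are \emph{equal} (both $-3a_M$), so cancellation does occur at the top power and again one must go to $(\log)^{M-1}$. For $N\ge4$ a different phenomenon intervenes: the resummation step is governed by the Lerch function at an integer second argument $\ge1$, and Lerch's formula produces an \emph{additional} factor of $\log L$ (a $(\log)^{M+1}$ term) in $F$ that has no counterpart in $1/f^3$; your ``$\log m\mapsto\log(1-t)$ one-for-one'' heuristic misses this. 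Only the case $N=1$ goes through as you describe. The paper handles all of this by an explicit case split $N=1$, $N=2$, $N\ge4$, $N=3$ (with a further subcase $M=1$), using a refinement of Lerch's formula to track the extra logarithms; without those computations your induction step does not close.
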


In~conclusion, if~there exists a radial balanced metric on~$\MM^2$,
then either it has more complicated behavior at the boundary than
given by~\eqref{tTI} and~\eqref{tTJ}; or~it involves a nonzero ``flat''
piece corresponding to the function~$h$ in~\eqref{tTH}, so that
$f\in C^\infty\setminus C^\omega[0,1]$. Observe that any nonzero $h$
in \eqref{tTH} must have some kind of singularity at $t=1$; an~example
of such function is $h(t)=e^{-1/\sqrt{1-t}}$.

We~pause to remark that weight functions with the boundary singularity
given by \eqref{tTI} and~\eqref{tTJ} occur naturally in the analysis on
strictly pseudoconvex domains: allowing a power at $r$ in front of the
double sum in~\eqref{tTI} (or~taking the appropriate root of the
weight function), examples of functions of the form \eqref{tTI}
and~\eqref{tTJ} include the Bergman kernel on the diagonal,
the Szeg\"o kernel on the diagonal, the potential of the Poincar\'e
metric (i.e.~the solution of the Monge-Amp\'ere equation,
cf.~Section~\ref{sec52} below), the~Bergman invariant,
and so forth~\cite{Emona}. The~necessity for a balanced metric to have
more complicated boundary behavior would therefore seem a bit surprising.

The~proofs of Theorem~\ref{pA} and Corollary~\ref{pB} are given in
Section~\ref{sec3}, after recalling some preliminaries about the
Kepler manifold in Section~\ref{sec2}. The~proof of Theorem~\ref{pC}
occupies Section~\ref{sec4}. The~final section, Section~\ref{sec5},
collects some concluding comments and remarks concerning completeness
of balanced metrics, an~identification of a family of Poincar\'e
(i.e.~K\"ahler-Einstein) metrics on~$\MM$, as~well as a correction of
a small overlook in the proof of a theorem in~\cite{EbD}.

\begin{remark*} Our~terminology in this paper is perhaps a bit at odds
with common usage due to the powers $n+1$ in \eqref{tTA} and~\eqref{tTB}:
namely, in~most of the literature one calls a K\"ahler form $\omega$
balanced if the reproducing kernel with respect to the volume element
$e^{-\Phi}\wedge^n\omega$ equals $c e^\Phi$ on the diagonal.
Thus $\omega$ is balanced in the sense of \eqref{tTA} if and only if
$(n+1)\omega$ is balanced in the sense of the preceding sentence.
Of~course, our~reason for this small deviation was to avoid having to
keep track of the factor $(n+1)$ all the~time.   \qed \end{remark*}

\section{Preliminaries}\label{sec2}
Recall that the Kepler manifold $\HH$ is the orbit of the vector
$e=(1,i,0,\dots,0)$ under the $O(n+1,\CC)$-action on~$\CC^{n+1}$;
its~unit ball $\MM$ as well as the outer boundary $\pam=\{z\in\HH:
|z|=1\}$ of the latter are invariant under $O(n+1,\CC)\cap U(n+1)=O(n+1,\RR)$,
and in fact $\pam$ is the orbit of $e$ under~$O(n+1,\RR)$.
In~particular, there is a unique $O(n+1,\RR)$-invariant probability
measure $d\mu$ on~$\pam$, coming from the Haar measure on the (compact)
group $O(n+1,\RR)$. Explicitly, denoting
\begin{equation}
 \alpha := (n+1) \frac{(-1)^{j-1}}{z_j}\,dz_1\wedge\dots\wedge\widehat{dz_j}
 \wedge\dots\wedge dz_{n+1} \qquad\text{on } z_j\neq0  \label{tTK}
\end{equation}
(this~is, up~to constant factor, the unique $SO(n+1,\CC)$-invariant
holomorphic $n$-form on~$\HH$, see~\cite{OPY}) and defining a $(2n-1)$-form
$\eta$ on $\pam$~by
$$ \eta(z)(V_1,\dots,V_{2n-1}) := \alpha(z)\wedge\overline{\alpha(z)}
 (z,V_1,\dots,V_{2n-1}), \qquad V_1,\dots,V_{2n-1}\in T_z(\pam),  $$
we~then have $d\mu=|\eta|/|\eta|(\pam)$, where $|\eta|$ denotes the
measure induced by $\eta$ on~$\pam$. Note that $\MM$, $\pam$ and $d\mu$
are also invariant under the complex rotations
$$ z\mapsto e^{i\theta} z, \qquad \theta\in\RR.   $$

\begin{proposition}\label{pD} For a function $u(z)=f(|z|^2)$ on~$\MM$,
$$ u^{n+1}\bigwedge\nolimits^n\Big(\frac i2\partial\dbar\log\frac1u\Big)
 = \frac{W[f]}{(n+1)^2} \, \frac{\alpha\wedge\overline\alpha}
 {(-1)^{n(n+1)/2}(2i)^n} ,  $$
where $\alpha$ is given by \eqref{tTK} and
\begin{equation}
 W[f] := (-1)^n t f^{\prime(n-1)} (ff'+tff''-tf^{\prime2}) . \label{tTL}
\end{equation}
$($Here and throughout, $f,f',f''$ and $W[f]$ are evaluated at $t=|z|^2.)$
\end{proposition}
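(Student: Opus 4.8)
The plan is to pull the whole computation back to $\HH$, write the K\"ahler form explicitly, and reduce the $n$-fold wedge power to two \emph{universal} top-degree forms whose ratios against $\alpha\wedge\overline\alpha$ are fixed by invariance. First I would set $t=|z|^2=z\cdot\oz$ and record, on $\HH$ (where the pulled-back differentials satisfy $z\cdot dz=0$), that $\partial t=\oz\cdot dz$, $\dbar t=z\cdot d\oz$ and $\partial\dbar t=\sum_j dz_j\wedge d\overline{z}_j=:\gamma$. Since $\log\frac1u=-\log f(t)$, differentiating twice gives
\[
 \omega:=\tfrac i2\partial\dbar\log\tfrac1u=\tfrac i2\bigl(A\,\beta+B\,\gamma\bigr),\qquad
 \beta:=\partial t\wedge\dbar t,\quad A=\frac{f^{\prime2}-ff''}{f^2},\quad B=-\frac{f'}f.
\]
As $\beta$ is decomposable we have $\beta\wedge\beta=0$, so the binomial expansion truncates to
\[
 \omega^{\wedge n}=\bigl(\tfrac i2\bigr)^n\bigl(B^n\,\gamma^{\wedge n}+n\,AB^{n-1}\,\beta\wedge\gamma^{\wedge(n-1)}\bigr),
\]
and the entire problem is reduced to the two forms $\gamma^{\wedge n}$ and $\beta\wedge\gamma^{\wedge(n-1)}$.

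Next I would use invariance. Both forms are $O(n+1,\RR)$-invariant, and $\alpha\wedge\overline\alpha$ is a nowhere-vanishing $SO(n+1,\CC)$-invariant top form; since $O(n+1,\RR)$ acts transitively on $\pam$ and $\CC^*$ acts by scaling, $\HH$ is a single $O(n+1,\RR)\times\CC^*$-orbit on which $t$ is, up to the $\CC^*$-weight, the only invariant. Comparing the weights under $z\mapsto\lambda z$ then forces
\[
 \gamma^{\wedge n}=c_1\,t\,\alpha\wedge\overline\alpha,\qquad
 \beta\wedge\gamma^{\wedge(n-1)}=c_2\,t^2\,\alpha\wedge\overline\alpha
\]
for two constants $c_1,c_2$. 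The key relation $c_2=c_1/n$ I would obtain pointwise on the tangent space $V:=T_z\HH=\{v:z\cdot v=0\}$. There $\beta=\ell\wedge\overline\ell$ with $\ell:=\oz\cdot dz|_V$, and writing $\gamma|_V=\tfrac2i\omega_0$ with $\omega_0$ the induced K\"ahler form, the elementary identity $\tfrac i2\,\ell\wedge\overline\ell\wedge\omega_0^{\wedge(n-1)}=\tfrac1n\|\ell\|^2\,\omega_0^{\wedge n}$ gives $\beta\wedge\gamma^{\wedge(n-1)}=\tfrac1n\|\ell\|_V^2\,\gamma^{\wedge n}$. Now the metric dual of $\ell$ is the vector $z$ itself, and $z\cdot z=0$ says precisely that $z$ is Hermitian-orthogonal to $\oz$, i.e. $z\in V$; hence the orthogonal projection of $z$ onto $V$ is $z$ and $\|\ell\|_V^2=\|z\|^2=t$. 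Thus $c_2t^2=\tfrac tn\,c_1t$, i.e. $c_2=c_1/n$. This is the single place where the defining equation of the Kepler manifold is genuinely used.

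Substituting these back, the scalar factor of $\omega^{\wedge n}$ becomes $c_1\,t\,B^{n-1}(B+At)$, which after multiplying by $u^{n+1}=f^{\,n+1}$ and clearing the powers of $f$ collapses to exactly $c_1\,W[f]$ (up to the overall $(\tfrac i2)^n$), since $f^{\,n+1}tB^{n-1}(B+At)=W[f]$. It remains only to compute $c_1$, which I would do at the single point $z=(1,i,0,\dots,0)$, where $t=2$: there $\alpha=(n+1)\,dz_2\wedge\dots\wedge dz_{n+1}$, and in the adapted basis of $V$ given by $w_0=(-i,1,0,\dots,0)$ and $w_k$ the $(k+2)$-th coordinate vector for $1\le k\le n-1$, a one-line determinant gives $\alpha\wedge\overline\alpha=(n+1)^2\,\Theta\wedge\overline\Theta$, where $\Theta$ is the holomorphic volume form dual to $(w_0,\dots,w_{n-1})$; the Gram matrix $\operatorname{diag}(2,1,\dots,1)$ then yields $\gamma^{\wedge n}=2\,n!\,(-1)^{n(n-1)/2}\,\Theta\wedge\overline\Theta$, whence $c_1=n!\,(-1)^{n(n-1)/2}/(n+1)^2$. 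With the usual convention $\bigwedge^n\omega=\omega^{\wedge n}/n!$ for the volume element and the identity $(\tfrac i2)^n(2i)^n=(-1)^n$, this reproduces the stated constant $\frac1{(n+1)^2}\cdot\frac1{(-1)^{n(n+1)/2}(2i)^n}$. I expect the only real obstacle to be this final bookkeeping --- the orientation sign $(-1)^{n(n-1)/2}$, the factorial absorbed in the volume normalization, and the powers of $i$ --- rather than anything conceptual; the conceptual crux is the projection computation $\|\ell\|_V^2=t$.
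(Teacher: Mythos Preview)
Your argument is correct and takes a genuinely different route from the paper. The paper works in the local chart $z_{n+1}\neq0$, uses the coordinates $Z=(z_1,\dots,z_n)$, and computes the Monge--Amp\`ere determinant $J[u]$ by direct $(n+1)\times(n+1)$ matrix manipulation, eventually reducing it to a $2\times2$ determinant in the plane spanned by $Z$ and $d=Z+\frac{Z\cdot Z}{|Z\cdot Z|}\oZ$. Your approach instead writes $\omega=\frac i2(A\beta+B\gamma)$ in terms of the two invariant building blocks $\beta=\partial t\wedge\dbar t$ and $\gamma=\partial\dbar t$, uses $\beta\wedge\beta=0$ to truncate the binomial expansion, and then exploits the transitivity of $O(n+1,\RR)\times\CC^*$ on $\HH$ to reduce everything to two constants $c_1,c_2$, whose ratio you identify by the Lefschetz identity $\frac i2\ell\wedge\bar\ell\wedge\omega_0^{n-1}=\frac1n\|\ell\|^2\omega_0^n$ together with the observation that the Kepler equation $z\cdot z=0$ means precisely $z\perp\bar z$, hence $z\in T_z\HH=\bar z^\perp$ and $\|\ell\|_V^2=t$.

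Both approaches arrive at the same scalar $tB^{n-1}(B+At)$, and after multiplying by $f^{n+1}$ this is exactly $W[f]$. Your method isolates more clearly \emph{where} the defining equation of $\HH$ is used (only in the projection step $\|\ell\|_V^2=t$), and avoids the somewhat delicate determinant reduction of the paper; the price is that you must normalize the constant $c_1$ by a separate pointwise evaluation and keep track of the $n!$ from the convention $\bigwedge^n\omega=\omega^{\wedge n}/n!$ (which the paper's Monge--Amp\`ere formula absorbs implicitly). Your final bookkeeping is fine: the sign check amounts to $(-1)^{n(n-1)/2}(\tfrac i2)^n\cdot(-1)^{n(n+1)/2}(2i)^n=(-1)^{n^2+n}=1$.
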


\begin{proof} Working e.g.~in the local chart $z_{n+1}\neq0$, we~choose
the local coordinates $z=(Z,z_{n+1})$ on~$\HH$, with $Z:=(z_1,\dots,z_n)$
and $z_{n+1}=\pm\sqrt{-Z\cdot Z}$. Then $u(z)=f(|z|^2)=f(|Z|^2+|Z\cdot Z|)$.
By~elementary linear algebra,
$$ u^{n+1}\bigwedge\nolimits^n\Big(\frac i2\partial\dbar\log\frac1u\Big)
 = J[u](\tfrac i2)^n dz_1\wedge d\oz_1\wedge\dots\wedge dz_n\wedge d\oz_n, $$
where $J[u]$ is the Monge-Amp\'ere determinant
$$ J[u]=(-1)^n \det\begin{bmatrix} u & {\partial u}/{\partial z_k} \\
    {\partial u}/{\partial\oz_j}
     & {\partial^2 u}/{\partial z_k\partial\oz_j} \end{bmatrix}
   _{j,k=1}^n .   $$
In~our case,
$$ \frac{\partial u}{\partial\oz_j} = f'(|Z|^2+|Z\cdot Z|) d_j,
 \quad d_j:=z_j + \frac{Z\cdot Z}{|Z\cdot Z|}\oz_j,  $$
and similarly $\frac{\partial u}{\partial z_k}=f'(|z|^2)\od_k$; while
$$ \frac{\partial^2 u}{\partial z_k\partial\oz_j} = d_j\od_k f''(|z|^2)
 + \Big(\delta_{jk}+\frac{\oz_j z_k}{|Z\cdot Z|}\Big) f'(|z|^2).  $$
Omitting the argument $|z|^2$ at $f$ and its derivatives, we~thus~get
\begin{align*}
(-1)^n J[u] &= \det\begin{bmatrix} f & \od_k f' \\ d_j f' &
  d_j\od_k f''+ (\delta_{jk}+\frac{\oz_j z_k}{|Z\cdot Z|})f'\end{bmatrix} \\
&= \det\begin{bmatrix} f & 0 \\ d_j f' &
  d_j\od_k f''+ (\delta_{jk}+\frac{\oz_j z_k}{|Z\cdot Z|})f'
  - f^{\prime2}\frac{d_j\od_k}f \end{bmatrix} \\
&= f \det\begin{bmatrix} (f''-\frac{f^{\prime2}}f)d_j\od_k
  + (\delta_{jk}+\frac{\oz_j z_k}{|Z\cdot Z|})f' \end{bmatrix}_{j,k=1}^n \\
&= f f^{\prime n} \det\begin{bmatrix} \left(\frac{f''}{f'}-\frac{f'}f\right)d_j\od_k
  + (\delta_{jk}+\frac{\oz_j z_k}{|Z\cdot Z|}) \end{bmatrix} \\
&= f f^{\prime n} \det\begin{bmatrix} I+\dfrac{\spr{\cdot,Z}Z}{|Z\cdot Z|}
  + \left(\frac{f''}{f'}-\frac{f'}f\right) \spr{\cdot,d}d \end{bmatrix} ,
\end{align*}
where $d$ is the vector $(d_1,\dots,d_n)=Z+\frac{Z\cdot Z}{|Z\cdot Z|}\oZ$.
Passing to a basis containing $Z,d$ shows that the last determinant equals
$$ \Big(1+\frac{|Z|^2}{|Z\cdot Z|}\Big)
   \Big(1+\left(\frac{f''}{f'}-\frac{f'}f\right)|d|^2\Big)
   - \left(\frac{f''}{f'}-\frac{f'}f\right)
    \Big|\Big\langle \frac Z{|Z\cdot Z|^{1/2}},d\Big\rangle\Big|^2   $$
(which formula remains in force even if $d,Z$ are linearly dependent).
Since $|d|^2=2|Z|^2+2|Z\cdot Z|=2|z|^2$ and
$$ \Big\langle \frac Z{|Z\cdot Z|^{1/2}},d\Big\rangle =
 \frac{|Z|^2+|Z\cdot Z|}{|Z\cdot Z|^{1/2}}
 = \frac{|z|^2}{|Z\cdot Z|^{1/2}}   $$
while $|Z\cdot Z|=|z_{n+1}|^2$, the determinant equals
$$ \frac{|z|^2}{|Z\cdot Z|} \Big(1+2|z|^2 \left(\frac{f''}{f'}-\frac{f'}f\right)\Big)
 - \frac{|z|^4}{|Z\cdot Z|} \left(\frac{f''}{f'}-\frac{f'}f\right)
 = \frac{|z|^2}{|z_{n+1}|^2} \Big(1+|z|^2 \left(\frac{f''}{f'}-\frac{f'}f\right)\Big). $$
Now by~\eqref{tTK}
$$ \frac{\alpha\wedge\overline\alpha} {(-1)^{n(n+1)/2}(2i)^n}
 = \frac{(n+1)^2}{|z_{n+1}|^2} (\tfrac i2)^n
   dz_1\wedge d\oz_1\wedge\dots\wedge dz_n\wedge d\oz_n ,  $$
so,~switching to the notation $|z|^2=:t$,
$$ J[u](\tfrac i2)^n dz_1\wedge d\oz_1\wedge\dots\wedge dz_n\wedge d\oz_n
 = \frac{(-1)^n t f f^{\prime n}(1+t\left(\frac{f''}{f'}-\frac{f'}f\right))}{(n+1)^2}
  \frac{\alpha\wedge\overline\alpha} {(-1)^{n(n+1)/2}(2i)^n} ,  $$
proving the claim.  \end{proof}

The~measure
$$ d\rho(z) := \frac{\alpha\wedge\overline\alpha} {(-1)^{n(n+1)/2}(2i)^n}  $$
admits a handy description in ``polar coordinates'': namely, it~was shown in
\cite[Lemma~2.1]{MeY} that for any measurable function $f$ on~$\HH$,
$$ \int_\HH f(z) \,d\rho(z) = c_\MM \int_0^\infty \int_\pam f(\sqrt t \zeta)
 \,t^{n-2} \,d\mu(\zeta) \, dt ,  $$
where
$$ c_\MM = (n-1) \int_\MM d\rho. $$
As~in Theorem~5 on page 273 in~\cite{BEY}, it~then follows, in~particular,
that for any nonnegative integrable function $\phi$ on ~$(0,1)$,
the~reproducing kernel $K_{\phi\,d\rho}(x,y)$ of the weighted Bergman space
$L^2\hol(\MM,\phi\,d\rho)$ on $\MM$ is given~by
$$ K_{\phi\,d\rho}(x,y) = \frac1{c_\MM} \sum_{k=0}^\infty
 \frac{N(k)}{c_{k+n-2}} (x\cdot\overline y)^k ,  $$
where
$$ N(k) := \binom{k+n-1}{n-1}+\binom{k+n-2}{n-1}  $$
while
$$ c_k := \int_0^1 t^k \phi(t) \,dt  $$
are the moments of the function~$\phi$.

\begin{corollary} \label{pE} For $u(z)=f(|z|^2)$ on~$\MM$,
$$ K_{u^{n+1}\wedge^n(\frac i2\partial\dbar\log\frac1u)}(z,z)
 = \frac{(n+1)^2}{c_\MM} F(|z|^2),  $$
where
$$ \postdisplaypenalty1000000
F(t) := \sum_{k=0}^\infty \frac{N(k)}{c_{k+n-2}} t^k , $$
with $N(k)$ and $c_k$ as above, for $\phi=W[f]$ given by~\eqref{tTL}.
\end{corollary}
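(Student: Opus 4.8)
The plan is to combine Proposition~\ref{pD} with the explicit reproducing-kernel formula for weights of the form $\phi\,d\rho$ recalled just above the statement, and then track the numerical constant $(n+1)^2$ and restrict to the diagonal. First I would invoke Proposition~\ref{pD} to rewrite the weight measure as
$$u^{n+1}\bigwedge\nolimits^n\Big(\frac i2\partial\dbar\log\frac1u\Big) = \frac{W[f]}{(n+1)^2}\,d\rho,$$
so that the Bergman space $L^2\hol(\MM,u^{n+1}\wedge^n(\frac i2\partial\dbar\log\frac1u))$ coincides, as a weighted $L^2\hol$ space, with $L^2\hol(\MM,\frac{W[f]}{(n+1)^2}\,d\rho)$. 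Since $\Phi$ is strictly plurisubharmonic, the K\"ahler form is positive, hence the volume form $\wedge^n\omega$ and the factor $u^{n+1}$ are positive; thus $\frac{W[f]}{(n+1)^2}$ is a nonnegative radial weight. It is also integrable against $d\rho$ near $|z|=1$, because $f(1)=0$, $f'(1)=-1$ force $W[f]$ to be bounded (indeed $W[f](1)=1$) at the boundary, and $f\in C^\infty[0,1)$ handles the interior. Therefore the recalled formula for $K_{\phi\,d\rho}$ applies with $\phi=\frac{W[f]}{(n+1)^2}$.

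The second step is bookkeeping of the moments. Writing $c_k=\int_0^1 t^k W[f](t)\,dt$ for the $k$-th moment of $W[f]$, as in the statement, the moments of $\phi=\frac{W[f]}{(n+1)^2}$ are simply $\frac{c_k}{(n+1)^2}$. Substituting these into the kernel formula gives
$$K_{\phi\,d\rho}(x,y) = \frac1{c_\MM}\sum_{k=0}^\infty \frac{(n+1)^2\,N(k)}{c_{k+n-2}}(x\cdot\overline y)^k.$$
Equivalently, one may first compute $K_{W[f]\,d\rho}$ and invoke the elementary fact that multiplying a measure by a positive constant $\lambda$ divides its reproducing kernel by $\lambda$ (an orthonormal basis for $\lambda\,d\nu$ is obtained by dividing one for $d\nu$ by $\sqrt\lambda$); here $\lambda=(n+1)^{-2}$ produces the same factor $(n+1)^2$. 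Either route yields the identical series.

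Finally I would restrict to the diagonal $x=y=z$. Using $z\cdot\overline z=\sum_j z_j\overline z_j=|z|^2=:t$, the series collapses to
$$K_{u^{n+1}\wedge^n(\frac i2\partial\dbar\log\frac1u)}(z,z) = \frac{(n+1)^2}{c_\MM}\sum_{k=0}^\infty \frac{N(k)}{c_{k+n-2}}\,t^k = \frac{(n+1)^2}{c_\MM}\,F(t),$$
which is precisely the asserted identity. I do not expect any genuine obstacle: the argument is essentially the substitution of Proposition~\ref{pD} into a known kernel expansion. The only points deserving a word of justification are the positivity and integrability of the weight $W[f]$, which guarantee that $\frac{W[f]}{(n+1)^2}\,d\rho$ is a legitimate measure to which the recalled formula applies, and the harmless scaling of the Bergman kernel under multiplication of the measure by $(n+1)^{-2}$.
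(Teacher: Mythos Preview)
Your proposal is correct and is exactly the approach the paper intends: the corollary has no separate proof in the paper because it is an immediate consequence of Proposition~\ref{pD} combined with the kernel formula for $K_{\phi\,d\rho}$ recalled just above, plus the trivial rescaling $K_{\lambda\,d\nu}=\lambda^{-1}K_{d\nu}$. The only caveat is that the boundary conditions $f(1)=0$, $f'(1)=-1$ you invoke for integrability are not part of the corollary's hypotheses; the statement should be read as valid whenever $W[f]$ is a nonnegative integrable weight on $(0,1)$, which is all the cited kernel formula requires.
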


\section{The smooth case}\label{sec3}
Specializing the last corollary to $n=2$, we~see in particular that
$$ K_{u^3\wedge^2(\frac i2\partial\dbar\log\frac1u)}(z,z)
=\frac9{c_\MM} F(|z|^2),  $$
where
\begin{equation}
 F(t) = \sum_{k=0}^\infty \frac{2k+1}{\int_0^1 t^k\phi(t)\,dt} t^k
 \label{tUA}
\end{equation}
with $\phi=W[f]=tf'(ff'+tff''-tf^{\prime2})$. The~balanced condition
therefore reads
\begin{equation} F(t) = \frac c{f(t)^3} \qquad\forall t\in(0,1)
 \label{tUB}
\end{equation}
for some nonzero constant~$c$ (differing from the one in \eqref{tTB}
by the factor $9/c_\MM$), while the hypothesis of Theorem~\ref{pA}
is simply that
\begin{equation} F-\frac c{f^3}\in C^\infty\nuj \label{tUC}
\end{equation}
with some nonzero constant~$c$ (the~same one as in the preceding formula).

\begin{proof}[Proof of Theorem~\ref{pA}]
The hypothesis on $f$ implies that $\phi\in C^\infty\nuj$ and
$\phi(1)=1$. In~terms of the variable $L:=\log\frac1t$, we~thus have
$$ \phi(t) \approx \sum_{j=0}^\infty a_j L^j, \qquad a_0=1,   $$
where $a_j:=\frac1{j!}\frac{d^j}{dL^j} \phi(e^{-L})|_{L=0}$ are some real numbers,
and ``$\approx$'' means that $\phi-\sum_{j=0}^{N-1}$ vanishes to order at least
$N$ at $t=1$, for each $N=0,1,2,\dots$. Using the formula
$$ \int_0^1 t^k L^j \, dt = \frac{j!}{(k+1)^{j+1}},   $$
this implies that
$$ c_k = \int_0^1 t^k \phi(t) \,dt \approx \sum_{j=0}^\infty \frac{j!a_j}{(k+1)^{j+1}}, $$
where ``$\approx$'' now means that $c_k-\sum_{j=0}^{N-1}$ is $O(k^{-N-1})$ as $k\to+\infty$,
for each $N=0,1,2,\dots$. Taking reciprocal gives
\begin{equation}
 \frac1{c_k} \approx (k+1) \sum_{m=0}^\infty \frac{A_m}{(k+1)^m} , \quad A_0=1,
 \label{iv5}
\end{equation}
where for $m\ge1$
\begin{align}
A_m &= \sum_{n=1}^m (-1)^n \sum_{\begin{subarray}{c} j_1,\dots,j_n\ge1\\j_1+\dots+j_n=m\end{subarray}}
 \prod_{j=1}^n (j_i!a_{j_i})  \nonumber \\
&= -m!a_m + (\text{a polynomial in }a_1,\dots,a_{m-1}) .  \label{iv6}
\end{align}
Thus
\begin{align}
F(t) &= \sum_{k=0}^\infty \frac{2k+1}{c_k}t^k
 \approx \sum_{k=0}^\infty (2(k+1)^2-(k+1)) \sum_{m=0}^\infty \frac{A_m}{(k+1)^m}  \nonumber \\
&= \sum_{m=0}^\infty A_m (2\Phi(t,m-2,1)-\Phi(t,m-1,1))  \nonumber \\
&= \sum_{m=-2}^\infty (2A_{m+2}-A_{m+1}) \Phi(t,m,1) \qquad (A_{-1}:=0)  \nonumber \\
&= \frac{2(1+t)}{(1-t)^3} + \frac{2A_1-1}{(1-t)^2} + \frac{2A_2-A_1}{1-t} \nonumber \\
&\hskip4em  + \sum_{m=1}^\infty \frac{2A_{m+2}-A_{m+1}}t
    \Big[\frac{(-1)^m}{(m-1)!} L^{m-1} \log L + h_m(L) \Big] ,  \label{tXA}
\end{align}
where $h_m$ is holomorphic in the disc~$|L|<2\pi$; here $\Phi(t,s,v)$ stands for the
Lerch transcendental function~\cite[\S1.11]{BErd}
$$ \Phi(z,s,v) := \sum_{k=0}^\infty \frac{z^k}{(k+v)^s},
 \qquad s\in\CC, \quad v\neq0,-1,-2,\dots,   $$
 and we have made use of Lerch's formula \cite[1.11(9)]{BErd}
\begin{equation}  t\Phi(t,m,1) = \frac{(-1)^m}{(m-1)!} L^{m-1} \log L
 + \sum_{k=0}^\infty{}' \frac{(-1)^k}{k!}\zeta(m-k) L^k ,
 \qquad L:=\log\frac1t ,   \label{tUL}  \end{equation}
valid for an integer $m\ge1$, where the sum on the right-hand side converges for $|L|<2\pi$,
and the $\sum'$ means that in the term $k=m-1$, $\zeta(1)$ should
be replaced by $\sum_{j=1}^{m-1}\tfrac1j$.

By~hypothesis, $F(t)-\frac c{f(t)^3}$ is smooth up to $t=1$. This is only possible
if all the log-terms in~\eqref{tXA} vanish, i.e.~$2A_{m+2}-A_{m+1}=0$ for
all $m\ge1$, or $A_m=2^{2-m}A_2$ for all $m\ge2$. Feeding this into~\eqref{iv5} gives,
after summing a geometric series,
\begin{equation}
 \frac1{c_k} = (k+1) \Big[ 1+ \frac{A_1}{k+1} + \frac{2A_2}{(k+1)(2k+1)}\Big]
  + O(k^{-\infty})   \label{iv7} \end{equation}
(where ``$O(k^{-\infty})$'' means ``$O(k^{-N})$ $\forall N>0$''). Hence
\begin{align*}
F(t) &= \sum_{k=0}^\infty \frac{2k+1}{c_k}t^k   \\
&= \frac4{(1-t)^3} + \frac{2A_1-3}{(1-t)^2} + \frac{2A_2-A_1}{1-t} +H(t) \\
&= \frac4{L^3}+\frac{2A_1+3}{L^2}+\frac{A_1+2A_2+1}L +H(e^{-L}),
\end{align*}
where $H$ denotes a function (possibly a different one at each occurrence)
smooth up to $t=1$.
On~the other hand, from $f(1)=0$, $f'(1)=-1$ we have $f(t)=L+O(L^2)$; using the
hypothesis $F-\frac c{f^3}\in C^\infty\nuj$ again, we~see that necessarily $c=4$
and
$$ f=\Big(\frac c{F-H}\Big)^{1/3}
= L-\frac{2A_1+3}{12}L^2+\frac{4A_1^2+6A_1+3-12A_2}{72}L^3 + O(L^4),  $$
which implies by a laborious but routine calculation,
$$ \phi = t f' (f f'+t f f''-t f^{\prime2})
 = 1 - \frac{2A_1}3 L + \frac{4A_1^2+A_1-6A_2}{12}L^2 +a_3 L^3 + O(L^4)  $$
(we~will not need the value of~$a_3$). Thus $a_1=-\frac{2A_1}3$, while by \eqref{iv6} $a_1=-A_1$.
Hence necessarily $a_1=A_1=0$, and only $A_2$ (or~$a_2$) remains as a free parameter.
We~have thus obtained a one-parameter family of germs of $f$ at $t=1$ that can
satisfy the condition~\eqref{tUC}.

We~finish the proof by showing that the one-parameter family of germs of $\phi$
given by~\eqref{tTZ} is the one that leads to the coefficients $A_1=0$, $A_m=2^{2-m}A_2$
for $m\ge2$. Namely, by~\eqref{tTZ}, the coefficients $a_j$ for $\phi_v$ are given~by
$$ a_j = \frac{(1+\sqrt v)(1-\sqrt v)^j-(1-\sqrt v)(1+\sqrt v)^j}{4^jj!2\sqrt v}. $$
Hence
\begin{align*}
c_k &\approx \sum_{j=0}^\infty \frac{j!a_j}{(k+1)^{j+1}}  \\
&= \sum_{j=0}^\infty \frac{(1+\sqrt v)(1-\sqrt v)^j-(1-\sqrt v)(1+\sqrt v)^j}
  {4^j2\sqrt v (k+1)^{j+1}}  \\
&= \frac1{2(k+1)\sqrt v}\Big[
 \frac{1+\sqrt v}{1-\frac{1-\sqrt v}{4(k+1)}}
  - \frac{1-\sqrt v}{1-\frac{1+\sqrt v}{4(k+1)}} \Big]    \\
&= \frac{16k+8}{16k^2+24k+9-v} ,
\end{align*}
or
$$ \frac1{c_k} = \frac{2k^2+3k+\frac{9-v}8}{2k+1} + O(k^{-\infty}) .  $$
However, this is the same thing as~\eqref{iv7} with $A_1=0$ and $A_2=\frac{1-v}{16}$.
This completes the proof.   \end{proof}

\begin{proof}[Proof of Corollary~\ref{pB}] If~$f$ is real-analytic near~1,
then so is $\phi=W[f]$ and clearly, by~\eqref{tTZ}, also~$\phi_v$,
hence it follows from \eqref{tTH} that so~is~$h$; since $h^{(k)}(1)=0$
$\forall k$, we must have $h\equiv0$. Thus we have for some $c\neq0$
\begin{equation}
 \frac c{f(t)^3} = \sum_{k=0}^\infty \frac{(2k+1)\;t^k}
 {\int_0^1 t^k\phi(t)\,dt} ,   \label{clubsuit}
\end{equation}
where
\begin{equation}
 \phi=tf'(ff'+tff''-tf^{\prime2})=\phi_v.   \label{spadesuit}
\end{equation}
We~show this leads to a contradiction.

For $v<0$, say $v=-s^2$ with $s>0$, we~have $\phi_v(t)=e^{L/4} (\cos\frac{Ls}4
-\frac1s\sin\frac{Ls}4)$ where $L:=\log\frac1t>0$; this changes sign as $t\searrow0$,
contradicting the fact that $\phi(|z|^2)\,d\rho(z)= u^3\wedge^2(\frac i2\partial\dbar\log\frac1u)$
should be a nonnegative volume element on~$\MM$.
Thus $v\ge0$. In~that case, the~integral in~\eqref{clubsuit} is finite if and only~if
$k-\tfrac14-\tfrac14|\sqrt v|>-1$. Write
$$ \frac{|\sqrt v|-3}4 = m-1+\delta, \qquad m\in\{0,1,2,\dots\},\quad 0\le\delta<1. $$
The~integral is then finite precisely for $k\ge m$, and equals, by~a~small computation,
$$ \int_0^1 t^k \phi_v(t) \,dt = \frac{2k+1}{(2k+2m+2\delta+1)(k+1-m-\delta)} .  $$
For the right-hand side of \eqref{clubsuit} we thus get, again by a~small computation,
$$ \sum_{k=0}^\infty \frac{(2k+1)\;t^k} {\int_0^1 t^k\phi(t)\,dt} =
\frac{t^m(1+3t+4m(1-t)-\delta(4m+2\delta-1)(1-t)^2)}{(1-t)^3} .   $$
Taking reciprocals, we should thus have
$$ f(t)^3=c\frac{(1-t)^3}{t^m(1+3t+4m(1-t)-\delta(4m+2\delta-1)(1-t)^2)}, $$
or
$$ f(t)=c^{1/3}\frac{1-t}{\root3\of{t^m(1+3t+4m(1-t)-\delta(4m+2\delta-1)(1-t)^2)}}.$$
The~condition $f'(1)=-1$ implies that $c=4$, so finally
\begin{equation}
 f(t)=\frac{2^{2/3}t^{-m/3}(1-t)}{\root3\of{1+3t+4m(1-t)-\delta(4m+2\delta-1)(1-t)^2}}.
 \label{diamondsuit}   \end{equation}
This should now satisfy $tf'(ff'+tff''-tf^{\prime2})=\phi_v$.
From \eqref{diamondsuit} we see that $f(t)=t^{-m/3}e^{h(t)}$ with $h$ holomorphic at
the origin. This yields
$$ tf'(ff'+tff''-tf^{\prime2}) = -\tfrac13 e^{3h(t)} t^{-m}
 (m-3th'(t)) (h'(t)+t h''(t)).  $$
Thus $t^m\phi_v(t)$ should be holomorphic at $t=0$. This is clearly not the case when
$v=0$ (since $\phi_0$ contains $\log t$); and it also cannot be the case when
$0<|\sqrt v|\neq1$, since then $\phi_v$ is a linear combination of two powers of~$t$
(with nonzero coefficients) whose exponents sum to $-\frac12$. The~only case left is
$v=1$, corresponding to $m=0$, $\delta=\frac12$; however, then \eqref{diamondsuit} becomes
$$ f(t) = \frac{2^{2/3}(1-t)} {\root3\of{1+3t}} ,   $$
and
$$ tf'(ff'+tff''-tf^{\prime2}) = \frac{16t(1+t)(1+2t+5t^2)}{(1+3t)^4} ,  $$
which clearly does not equal $\phi_1(t)\equiv1$. This proves the corollary.
\end{proof}

\section{The general case}\label{sec4}
We~recall the following refinement of Lerch's formula~\eqref{tUL}, proved in~\cite{EbD}.

\begin{lemma}\label{pF} {\rm(Lemma~4 in~\cite{EbD})} The~series
\begin{equation}
  \sum_{k=1}^\infty \frac{t^k}{k^s} \Big(\log\frac1k\Big)^n
 = \Big(\frac d{ds}\Big)^n t\Phi(t,s,1) , \qquad n=0,1,2,\dots,
\label{tVA}  \end{equation}
equals
\begin{align}
&\sum_{j=0}^n \binom nj (-1)^{n-j} \Gamma^{(n-j)}(1-s) L^{s-1}(\log L)^j
  + \sum_{k=0}^\infty \zeta^{(n)}(s-k)\frac{(-1)^k}{k!}L^k,  \label{tVB} \\
&\hskip8em |L|<2\pi, \quad s\neq1,2,3,\dots, \quad L:=\log\frac1t. \nonumber
\end{align}
For $s=1,2,3,\dots$, the first sum on the right-hand side of the last
formula has to be replaced~by
\begin{equation}  \sum_{j=0}^n \binom nj c_{s,n-j} L^{s-1} (\log L)^j
 + \frac{(-1)^{s-1}}{(s-1)!}
   \Big[\gamma_n-\frac{(\log L)^{n+1}}{n+1}\Big] L^{s-1} ,  \label{tVC}
\end{equation}
while the term $k=s-1$ in the second sum on the right-hand side of
\eqref{tVB} has to be omitted. Here $c_{s,j}$ and $\gamma_j$ are certain
constants $($given explicitly below$)$.   \end{lemma}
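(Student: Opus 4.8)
The plan is to derive \eqref{tVA}--\eqref{tVC} by differentiating the classical Lerch expansion $n$ times in the parameter $s$. For $s\notin\{1,2,3,\dots\}$ and $|L|<2\pi$ one has
\[ t\Phi(t,s,1)=\Gamma(1-s)\,L^{s-1}+\sum_{k=0}^\infty\frac{(-1)^k}{k!}\,\zeta(s-k)\,L^k, \]
of which \eqref{tUL} is the limiting case $s\to m\in\{1,2,\dots\}$. Here the radius $2\pi$ reflects the bound $\zeta(s-k)/k!=O((2\pi)^{-k})$ as $k\to\infty$, a consequence of the functional equation, and the same estimate (up to a factor polynomial in $k$) holds for each $s$-derivative $\zeta^{(n)}(s-k)/k!$; thus the right-hand side is holomorphic in $s$ near any non-integer $s_0$ and jointly holomorphic in $L$ for $|L|<2\pi$, with a meromorphic continuation across the positive integers. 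Since $(\frac d{ds})^n k^{-s}=k^{-s}(\log\frac1k)^n$, the left-hand side of \eqref{tVA} is precisely $(\frac d{ds})^n t\Phi(t,s,1)$, so everything reduces to differentiating the right-hand side.

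For non-integer $s$ I would first justify term-by-term differentiation: on the left by the local uniform convergence of $\sum_k t^k(\log\frac1k)^n/k^s$ for $t$ in compact subsets of $(0,1)$, and on the right by Weierstrass's theorem using the $O((2\pi)^{-k})$ bound. The computation then rests on two elementary identities, $(\frac d{ds})^j L^{s-1}=L^{s-1}(\log L)^j$ (because $L^{s-1}=e^{(s-1)\log L}$) and $(\frac d{ds})^{n-j}\Gamma(1-s)=(-1)^{n-j}\Gamma^{(n-j)}(1-s)$ (chain rule). Expanding $(\frac d{ds})^n[\Gamma(1-s)L^{s-1}]$ by the Leibniz rule produces the first sum in \eqref{tVB}, and differentiating the zeta series term by term produces the second, which establishes \eqref{tVB}.

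The hard part will be the integer case $s=m\in\{1,2,\dots\}$, where the expansion must be read as a limit. At $s=m$ the factor $\Gamma(1-s)$ has a simple pole, and the term $k=m-1$ of the zeta series, carrying $\zeta(s-m+1)$, has a simple pole as well; I would isolate exactly these two terms, set $\sigma:=s-m$, and expand them in Laurent series about $\sigma=0$, using the expansion of $\Gamma$ at the non-positive integer $1-m$ (whose regular coefficients are built from the polygamma values $\psi^{(i)}(m)$ and furnish the constants $c_{m,j}$), the factor $L^{s-1}=L^{m-1}\sum_{p\ge0}\sigma^p(\log L)^p/p!$, and the Laurent series $\zeta(1+\sigma)=\sigma^{-1}+\gamma+\sum_{i\ge1}(-1)^i\gamma_i\sigma^i/i!$ with the Stieltjes constants $\gamma_i$. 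One checks that the two principal parts cancel, so the combined expression is holomorphic at $\sigma=0$; its value at $\sigma=0$ reproduces \eqref{tUL}, and $n!$ times the coefficient of $\sigma^n$ in its Taylor expansion gives \eqref{tVC}. The decisive contribution is the pairing of the $p=n+1$ term $\sigma^{n+1}(\log L)^{n+1}/(n+1)!$ of $L^{s-1}$ with the simple pole of $\Gamma(1-s)$, which yields, up to the prefactor $\frac{(-1)^{m-1}}{(m-1)!}=\frac{(-1)^{s-1}}{(s-1)!}$, exactly the term $-\frac{(\log L)^{n+1}}{n+1}\,L^{m-1}$; the remaining regular contributions of $\Gamma$ and $\zeta$ collect into $\gamma_n$ and the $c_{m,j}$. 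No analytic subtlety remains at this stage---only the (laborious) bookkeeping of these Laurent coefficients.
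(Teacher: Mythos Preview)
The paper does not actually prove this lemma; it merely quotes it as Lemma~4 of~\cite{EbD} and then records the definitions of the constants $\gamma_j$ and $c_{m,j}$. So there is no ``paper's own proof'' to compare against here. Your approach---differentiate the classical Lerch expansion $t\Phi(t,s,1)=\Gamma(1-s)L^{s-1}+\sum_k\frac{(-1)^k}{k!}\zeta(s-k)L^k$ $n$ times in~$s$ via Leibniz, and at positive integers $s=m$ pass to the limit by pairing the simple poles of $\Gamma(1-s)$ and of the $k=m-1$ zeta term in a Laurent expansion in $\sigma=s-m$---is the natural one and is correct; in particular your identification of the $-\frac{(\log L)^{n+1}}{n+1}$ term as coming from the $p=n+1$ Taylor coefficient of $L^{s-1}$ against the pole of $\Gamma$ checks out with the stated prefactor $\frac{(-1)^{s-1}}{(s-1)!}$. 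This is almost certainly also the argument in~\cite{EbD}, since there is essentially only one way to obtain~\eqref{tVB}--\eqref{tVC} from the case $n=0$.

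One small caution on the integer case: you should say explicitly why the $n$-th $s$-derivative at $s=m$ equals $n!$ times the $\sigma^n$-coefficient of the \emph{combined} holomorphic function (after cancellation of the two poles), rather than computing the two singular pieces separately and adding---the individual pieces are not differentiable at $s=m$. You allude to this (``the two principal parts cancel''), but make sure the write-up applies $(\frac d{ds})^n$ only after the cancellation has produced a function holomorphic at~$m$.
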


Here $\gamma_j$ are the Stieltjes constants, i.e.
$$ \zeta(1+z) = \frac1z + \sum_{j=1}^\infty \frac{\gamma_j}{j!}z^j,
 \qquad z\in\CC,   $$
while $c_{m,j}$ are, similarly, the~coefficients of the Laurent expansion
of the Gamma function,
\begin{equation}
 \Gamma(1-m-z) = \frac{(-1)^m}{(m-1)!z} + \sum_{j=0}^\infty
  \frac{c_{m,j}}{j!} z^j, \qquad |z|<1, \; m=1,2,3,\dots.   \label{tVD}
\end{equation}
From the functional equation $\Gamma(z+1)=z\Gamma(z)$ for the Gamma function
we get the recurrence relations
\begin{equation}
c_{m+1,j}=-\frac{c_{m,j}+j c_{m+1,j-1}}m \quad\text{for }j\ge1,
 \qquad c_{m+1,0}=\frac{(-1)^m}{m!m}-\frac{c_{m,0}}m,  \label{tVE}
\end{equation}
with
\begin{equation}
 c_{1,j}=-\frac{c_{0,j+1}}{j+1}, \qquad c_{0,j}:=(-1)^j\Gamma^{(j)}(1). \label{tVF}
\end{equation}

For~later use, we~also note that the simple formula
$$ \int_0^\infty L^s e^{-kL} \,dL = \frac{\Gamma(s+1)}{k^{s+1}},
  \qquad \Re s>-1, \quad k=1,2,3,\dots,   $$
yields upon applying $(d/ds)^n$ to both sides ($n=0,1,2,\dots$)
\begin{equation}  \int_0^\infty L^s (\log L)^n e^{-kL} \,dL =
  \sum_{l=0}^n \binom nl \frac{\Gamma^{(n-l)}(s+1)}{k^{s+1}}
  \Big(\log\frac1k\Big)^l,   \label{tVI}  \end{equation}
by~the Leibniz rule.

\begin{proof}[Proof of Theorem~\ref{pC}] Assume that $u(z)=f(|z|^2)$ is a
smooth radial function on~$\MM^2$, with the asymptotic expansion \eqref{tTI}
satisfying~\eqref{tTJ}, which satisfies~\eqref{tUC}. Passing again from the
variable $t=|z|^2$ to $L=\log\frac1t$, \eqref{tTI}~and \eqref{tTJ} become
\begin{equation}
 f(t) \approx L \sum_{k=0}^\infty \sum_{j=0}^{M_k} a_{kj} L^k(\log\tfrac1L)^j,
 \qquad M_0=0, \quad a_{00}=1.   \label{tVG}  \end{equation}
We~will show that $M_k=0$ for all~$k$, so~that $f\in C^\infty\nuj$ as~claimed.

Assume, to~the contrary, that there is $N\ge1$ such that $M_0=M_1=\dots
=M_{N-1}=0$ but $M_N\ge1$ with $a_{NM_N}\neq0$.
By~\eqref{tVG} and the definition of~\eqref{tTI}, we~have
\begin{equation}
 u \approx L \Big[ 1 + p_N(L) + \sum_{j=1}^M a_j L^N(\log\tfrac1L)^j
  + O(L^{N+\delta}) \Big] ,   \label{tVH}  \end{equation}
with any $0<\delta<1$; here we started writing just $M$ and $a_j$ for
$M_N$ and~$a_{Nj}$, respectively, and $p_N$ stands for some polynomial
(not~necessarily the same one at each occurrence) of~degree $N$ without
constant term. Furthermore, \eqref{tVH}~can be differentiated termwise
any number of times.
Viewing, for the duration of this proof, $f$~temporarily as a function
of $L$ rather than of $t=e^{-L}$, the formula for $W[f]=\phi$ becomes
just $\phi=e^L f'(f^{\prime2}-ff'')$, and a routine computation gives
\begin{equation}
 \phi = 1+p_N(L)+\sum_{j=1}^M A_j L^N (\log\tfrac1L)^j+O(L^{N+\delta}),
 \label{tVK} \end{equation}
with
$$ A_M = (3-N)(N+1)a_M.  $$
For the moments $c_k=\int_0^1 t^k \phi(t)\,dt$ we~thus obtain, in~view of~\eqref{tVI},
$$ c_k\approx\frac1{k+1}+\frac{p_N(\frac1{k+1})}{k+1} + \sum_{j=1}^M
 \frac{A'_j}{(k+1)^{N+1}} (\log(k+1))^j + O\Big(\frac1{(k+1)^{N+1+\delta}}\Big), $$
with $A'_M=N!A_M$. Taking reciprocal and multiplying by~$2k+1$ gives
$$ \frac{2k+1}{c_k} \approx 2(k+1)^2 \Big[1+p_N(\tfrac1{k+1}) + \sum_{j=1}^M
 \frac{A''_j}{(k+1)^N} (\log(k+1))^j + O\Big(\frac1{(k+1)^{N+1+\delta}}\Big)\Big] $$
with $A''_M=-A'_M$. It~follows that
\begin{equation}
 F(t) = \sum_{k=0}^\infty \frac{2k+1}{c_k}t^k = 2\sum_{j=0}^N \beta_j \Phi(t,j-2,1)
 + 2\sum_{j=1}^M A''_j(-1)^j\Phi^{(j)}(t,N-2,1) + R(t), \label{tVJ}
\end{equation}
where $\Phi^{(j)}$ denotes the $j$-th derivative of $\Phi$ with respect to
the second argument, $\beta_j$~are some coefficients, and the remainder term
$R(t)$ is $O((1-t)^{N-3+\delta})$ for $N=1,2$, and belongs to $C^{N-3}\nuj$
if $N\ge3$.

If~$N=1$, \eqref{tVJ} becomes
$$ F=\frac4{L^3} \Big[1-2a_M L(\log\tfrac1L)^M+\dots\Big]  $$
where the dots stand for lower-order terms. On~the other hand,
$$ \frac1{f^3} = \frac1{L^3} \Big[1-3a_M L(\log\tfrac1L)^M+\dots\Big].  $$
Thus the condition $F-\frac c{f^3}\in C^\infty\nuj$ forces ($c=4$~and) $a_M=0$,
a~contradiction. Thus $N=1$ cannot occur.

If~$N=2$, \eqref{tVJ} becomes, after a bit more lengthy but completely
routine computation (using \eqref{tVA}--\eqref{tVF}) which we~omit,
$$ F=\frac4{L^3} \Big[1+\beta_1'L-3a_M L^2(\log\tfrac1L)^M
     - (3a_{M-1}-\tfrac{5M}2a_M)L^2(\log\tfrac1L)^{M-1}+\dots\Big] ,  $$
while, by~\eqref{tVH},
$$ \frac1{f^3}=\frac1{L^3}\Big[1+\beta_1''L-3a_M L^2(\log\tfrac1L)^M
     - 3a_{M-1}L^2(\log\tfrac1L)^{M-1}+\dots\Big]   $$
(the~dots again denote lower-order terms).
Hence $F-\frac c{f^3}\in C^\infty\nuj$ again forces ($c=4$~and) $\frac{5M}2a_M=0$,
contradicting the hypothesis that $M\ge1$ and $a_M\neq0$.
Thus $N=2$ cannot occur either.

If~$N\ge4$, \eqref{tVJ} becomes, using Lemma~\ref{pF},
\begin{align*}
F &= \frac4{L^3} \Big[1+\sum_{j=1}^{N-1}\beta_j'L^j
     +\sum_{j=3}^{N-1} \frac{A''_j}2 \frac{(-1)^j}{(j-3)!}L^j\log L  \\
&\hskip4em -\frac{N(N+1)(N-1)(N-2)(N-3)}{2(M+1)}(-1)^NL^N(\log\frac1L)^{M+1}
     +\dots\Big] ,
\end{align*}
while, by~\eqref{tVH},
$$ \frac1{f^3}=\frac1{L^3}\Big[1+\sum_{j=1}^{N-1}\beta_j''L^j
     -3a_M L^N(\log\tfrac1L)^M+\dots\Big].   $$
Thus $F-\frac c{f^3}\in C^\infty\nuj$ can only hold if ($c=4$, $A''_j=0$ for
all $j=1,\dots,N-1$, and) $N(N+1)(N-1)(N-2)(N-3)=0$, i.e.~$N\in\{1,2,3\}$,
a~contradiction again. Thus $N\ge4$ is likewise not possible.

We~are thus left with the case of $N=3$; note that in that case $A_M=0$ even
though $a_M\neq0$, so~a~bit more detailed analysis is needed.
Computing~$A_{M-1}$, \eqref{tVK}~gives
$$ \phi=1+p_2(L)+4Ma_M L^3(\log\tfrac1L)^{M-1}+\dots  $$
provided $M>1$. This gives, in~turn,
\begin{gather*}
c_k=\frac1{k+1}+\frac{p_2(\frac1{k+1})}{k+1}
 +\frac{24Ma_M}{(k+1)^4} (\log(k+1))^{M-1}+\dots,   \\
\frac{2k+1}{c_k}=2(k+1)^2\Big[1+p_2(\tfrac1{k+1})
 -\frac{24Ma_M}{(k+1)^3} (\log(k+1))^{M-1}+\dots\Big],
\end{gather*}
and
$$ F=\frac4{L^3} \Big[1+p_2(L) -12a_M L^3(\log\tfrac1L)^M+\dots\Big], $$
while by~\eqref{tVK},
$$ \frac1{f^3}=\frac1{L^3}\Big[1+p_2(L)-3a_M L^3(\log\tfrac1L)^M+\dots\Big]. $$
Hence the condition $F-\frac c{f^3}\in C^\infty\nuj$ forces ($c=4$~and) $a_M=0$,
a~contradiction as before.

This finally leaves us with the situation when $N=3$ and $M=1$, so~that
$$ f=L(1+\alpha_1L+\alpha_2L^2+aL^3\log\tfrac1L+\dots), \qquad a\neq0,  $$
yielding in turn
\begin{align*}
\phi &= 1+(4\alpha_1+1)L+(4\alpha_1+6\alpha_1^2+3\alpha_2+\tfrac12)L^2 \\
 &\hskip4em + (4a+2\alpha_1+6\alpha_1^2+4\alpha_1^3+3\alpha_2
              +10\alpha_1\alpha_2+\tfrac16)L^3+\dots,  \\
c_k &= \dfrac1{k+1}+\frac{4\alpha_1+1}{(k+1)^2}
       +\frac{2(4\alpha_1+6\alpha_1^2+3\alpha_2+\tfrac12)}{(k+1)^3} \\
 &\hskip4em + \frac{6(4a+2\alpha_1+6\alpha_1^2+4\alpha_1^3+3\alpha_2
              +10\alpha_1\alpha_2+\tfrac16)}{(k+1)^4}+\dots,   \\
\frac{2k+1}{c_k} &= 2(k+1)^2 \Big[1-\frac{4\alpha_1+\frac32}{k+1}
    + \frac{\frac12+2\alpha_1+4\alpha_1^2-6\alpha_2}{(k+1)^2} \\
 &\hskip4em +\frac{2\alpha_1^2+8\alpha_1^3-12\alpha_1\alpha_2-3\alpha_2-24a}
     {(k+1)^3} +\dots\Big] ,
\end{align*}
and
\begin{align*}
F &= \frac4{L^3} \Big[1+(\tfrac14-2\alpha_1)L+(2\alpha_1^2-\alpha_1-3\alpha_2)L^2 \\
&\hskip6em  +(12a-\alpha_1^2-4\alpha_1^3+\tfrac32\alpha_2+6\alpha_1\alpha_2)L^3\log L
     +\dots\Big] ,
\end{align*}
while
$$ \frac1{f^3}=\frac1{L^3} \Big[1-3\alpha_1L+(6\alpha_1-3\alpha_2)L^2
      +3aL^3\log L+\dots\Big] .   $$
From $F-\frac c{f^3}\in C^\infty\nuj$ we thus get in turn, comparing the coefficients,
$c=4$, $\alpha_1=-\frac14$ and $a=0$, contradicting once again the assumption
that $a$ is nonzero. This completes the proof.  \end{proof}

\section{Concluding remarks}\label{sec5}
\subsection{Completeness}\label{sec51}
An~obvious distinction of $\MM$ from domains in $\CC^n$~is, of~course,
the~presence of the (removable) singularity at $z=0$. We~pause to note that,
in~fact, there can exist no balanced metric on~$\MM$ (radial or~not, smooth
up to $|z|=1$ or~not) that would be complete at the origin.

\begin{theorem} Let $u$ be a solution to the equation~\eqref{tTB}.
Then the balanced metric $\partial\dbar\log\tfrac1u$ is not complete at $z=0$.
\end{theorem}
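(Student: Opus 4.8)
The plan is to exhibit a curve of finite metric length running from an interior point of $\MM$ to the point $z=0$, which does \emph{not} belong to $\MM$; a metric-Cauchy sequence whose only possible limit lies outside the space then witnesses incompleteness. The natural probe is a radial ray $\gamma(s)=s\zeta$, $0<s\le s_0$, with $\zeta\in\pam$ fixed, so that $\zeta\cdot\zeta=0$, $|\zeta|=1$, every $\gamma(s)$ lies in $\MM$, and $\gamma'(s)=\zeta$ is tangent to $\HH$ because $\zeta\cdot(s\zeta)=0$. The whole proof reduces to bounding the metric along this ray near the origin.

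First I would pin down the size of $u$ near the origin. Rewriting \eqref{tTB} for $n=2$ as $c/u(z)^3=K_{u^3\wedge^2\omega}(z,z)$ with $\omega=\frac i2\partial\dbar\log\frac1u$, it is enough to bound the Bergman kernel on the diagonal from above and below near $0$. Since $z=0$ is a removable singularity, every function in the weighted Bergman space extends holomorphically to $\MM\cup\{0\}$, evaluation at $0$ is a bounded functional by interior estimates, and standard Bergman-kernel estimates give $0<\inf_{|z|<\epsilon}K(z,z)\le\sup_{|z|<\epsilon}K(z,z)<\infty$; hence $u$ extends continuously to $0$ with $u(0)>0$. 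In the radial case this is explicit from Corollary~\ref{pE}: the series $F(t)=\sum_k\frac{2k+1}{c_k}t^k$ is real-analytic at $t=0$ with $F(0)=1/c_0>0$, so $f=(c/F)^{1/3}$ is real-analytic and positive at $t=0$.

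The decisive point is regularity: I would show that $u$ is the restriction to $\HH$ of a positive real-analytic function $\tilde u$ on an ambient neighborhood of $0$ in $\CC^{n+1}$. As $\HH\cup\{0\}=\{z\cdot z=0\}$ is a hypersurface, germs of holomorphic functions on it lift to ambient holomorphic germs; carrying this through for the reproducing kernel exhibits $K(z,z)$, and hence $u^3=c/K$ and $u$ itself, as restrictions of ambient real-analytic functions. (For radial $u$ this is immediate, since $\tilde u(z)=f(|z|^2)$ is real-analytic in $(z,\bar z)$ near $0$.) Then the balanced form is the restriction to $T\HH$ of $\frac i2\partial\dbar(-\log\tilde u)$, whose coefficients are bounded on a compact neighborhood of $0$, so that $\|v\|_\omega^2=\sum_{j,k}\partial_j\partial_{\bar k}(-\log\tilde u)\,v_j\bar v_k\le C|v|^2$ for every ambient vector $v$, in particular for vectors tangent to $\HH$.

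The length estimate is then immediate: the $\omega$-length of $\gamma$ is at most $\sqrt C\int_0^{s_0}|\gamma'(s)|\,ds=\sqrt C\,s_0<\infty$ (in the radial case one even gets the explicit limit $\|\zeta\|_\omega^2\to-f'(0)/f(0)$ as $s\to0$). Thus any sequence $s_n\zeta$ with $s_n\to0$ is $\omega$-Cauchy; it cannot converge inside $\MM$, since near an interior point the metric is comparable to the Euclidean one and would force Euclidean convergence to $0\notin\MM$. Hence the balanced metric is incomplete at the origin. I expect the main obstacle to be precisely the regularity step of the previous paragraph: upgrading the mere continuity of $K(z,z)$ across the \emph{singular} point $0$ to a genuine $C^2$ (or real-analytic) ambient extension, which is what legitimizes the Hessian bound and the Euclidean domination of the metric — while for radial $u$ this is bypassed entirely by the explicit formula for $f$.
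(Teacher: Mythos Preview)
Your radial-ray strategy coincides with the paper's: both take $\psi(x)=xz$ for a fixed $z\in\MM$ and show the length integral is finite. The divergence is in how the integrand $\partial_x\partial_{\bar x}\log\frac1{u(xz)}$ is controlled near $x=0$.

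The paper's route is shorter and more robust. It stays in the single complex variable $x$ throughout: after passing to $\log K(xz,xz)$ via the balanced equation (as you also do), it writes $K(xz,xz)=\sum_j|e_j(xz)|^2$ for an orthonormal basis; since the origin is a removable singularity, each $x\mapsto e_j(xz)$ is holomorphic across $0$, and if $m\ge0$ denotes the common order of vanishing one obtains $K(xz,xz)=|x|^{2m}e^{F(x,\bar x)}$ with $F$ real-analytic near the origin. Then $\partial_x\partial_{\bar x}\log K=\partial_x\partial_{\bar x}F$ (the term $m\log|x|^2$ is harmonic for $x\neq0$), which is continuous at $x=0$, and the length integral converges. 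The key feature is that this works \emph{even if $K$ vanishes at the origin}, and never requires an ambient extension of anything beyond the individual~$e_j$.

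Your approach instead tries to extend $u$ itself to a positive real-analytic ambient function and bound the full Hessian. This carries a genuine gap in the non-radial case: you assert $0<\inf_{|z|<\epsilon}K(z,z)$, but nothing in the hypotheses guarantees it --- you would need a Bergman-space element nonvanishing at $0$, and none is supplied. If in fact $K(z,z)\to0$ (equivalently $u\to\infty$) as $z\to0$, your ambient $\tilde u$ cannot be positive and the Hessian bound collapses. The paper's $|x|^{2m}$ factorization absorbs precisely this possibility, so rather than attempting to patch the regularity step you flagged, it is cleaner to adopt the one-variable argument. (Your separate treatment of the radial case via the explicit series for $F(t)$ is correct and self-contained.)
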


\begin{proof} Recall that in some local coordinate chart, the above mentioned metric is given
explicitly by the coefficients
$$ g_{j\ok}(z) = \frac{\partial^2}{\partial z_j\partial\oz_k} \log\frac1{u(z)} ;  $$
and the length of a differentiable curve $\psi:(0,1)\to\MM$ is then given~by
\begin{equation}
 \int_0^1 \sqrt{\sum_{j,k} g_{j\ok}(\psi(x)) \psi'_j(x)\overline{\psi'_k(x)}} \,dx.
 \label{ddagger}
\end{equation}
We~apply this to the special case when $\psi(x)=xz$ is the segment joining the origin to
some fixed point $z$ of~$\MM$. The~last sum then equals, by~an elementary computation,
\begin{equation}
 \frac{\partial^2}{\partial x\partial\ox} \log\frac1{u(xz)} .  \label{dagger}
\end{equation}
Now~by the balanced condition~\eqref{tTB}, $\log c+(n+1)\log\frac1{u(z)}=\log K(z,z)$ where
$K=K_{u^{n+1}\wedge^n(\frac i2\partial\dbar\log\frac1u)}$ is the reproducing kernel.
Thus, up~to the (immaterial) constant factor $n+1$, \eqref{dagger}~equals
$$ \frac{\partial^2}{\partial x\partial\ox} \log K(xz,xz) .  $$
In~terms of any orthonormal basis $\{e_j\}$ of the corresponding Bergman space,
the~last kernel is given by $\sum_j|e_j(xz)|^2$.
Note that since the origin is a removable singularity in~$\HH$ (cf.~\cite[Theorem~2.4]{EUjk}
even for the more general situation then our ordinary Kepler manifold), each $e_j$ actually
extends to a holomorphic function in some neighborhood of the origin in $\CC^{n+1}$.
Let $m\ge0$ be the largest integer with the property that all the functions
$x\mapsto e_j(xz)$, $j=0,1,2,\dots$ (holomorphic in one complex variable~$x$),
vanish to order $m$ at $x=0$. We~then obtain
$$ K(xz,xz) = |x|^{2m} e^{F(x,x)}   $$
with some $F(x,y)$ holomorphic in $x,\overline y$ near $(x,y)=(0,0)$. Consequently,
$$ \frac{\partial^2}{\partial x\partial\ox} \log K(xz,xz)
 = \frac{\partial^2}{\partial x\partial\ox} F(x,x)   $$
is~(nonnegative and) continuous in a neighborhood of $x=0$.
Hence so will be its square root, and thus the integral \eqref{ddagger} is finite.
This means that there is a curve of finite length joining $z$ to the origin,
proving therefore that the metric is not complete at the origin.   \end{proof}

The last argument in fact shows that no complete balanced metric can exist
on a normal complex analytic space with singular locus of codimension $\ge2$.
% We omit the details.

\subsection{Poincar\'e metrics}\label{sec52}
Recall that, quite generally, for a K\"ahler metric $g_{j\ok}=\partial_j\dbar_k\Phi$
given by a potential~$\Phi$, the~volume element is given, in~the local chart,
by $g=\det[g_{j\ok}]=e^{(n+1)\Phi}J[e^{-\Phi}]$, so~that the Ricci tensor
$\Ric_{j\ok}=\partial_j\dbar_k\log g$ satisfies
$$ \Ric_{j\ok} = \partial_j\dbar_k J[u] + (n+1) g_{j\ok} \qquad(u=e^{-\Phi}).  $$
The~metrics with $\partial\dbar J[u]\equiv0$ therefore satisfy $\Ric_{j\ok}=(n+1)
g_{j\ok}$, i.e.~are \emph{K\"ahler-Einstein} metrics with constant~$(n+1)$.
Those for which $u$ in addition vanishes precisely to the first order at the boundary
(i.e.~$u=0$, $\nabla u\neq0$ on~$\pOm$) are usually called \emph{Poincar\'e} metrics
on~$\Omega$, cf.~\cite[Chapter~11]{BFG}. We~have seen in course of the proof of
Proposition~\ref{pD} that for $u(z)=f(|z|^2)$ on~$\MM$, $J[u](z)=W[f](|z|^2)/|z_{n+1}|^2$,
with $W[f]$ given by~\eqref{tTL}. It~follows, in~particular, that functions $f$ which
are solutions~to
\begin{equation}
 (-1)^n t f^{\prime(n-1)}(ff'+tff''-tf^{\prime2})\equiv c, \quad f(1)=0,\;f'(1)\neq0,
 \label{tWD}
\end{equation}
give rise to Poincar\'e metrics on~$\MM$. Replacing $f$ by an appropriate multiple,
we~can assume that $f'(1)=-1$ and $c=1$. Note that, in~particular, for $n=2$ \eqref{tWD}
is precisely the equation~\eqref{tTH} for $v=1$ and $h\equiv0$ (thus Poincar\'e metrics
are among the ``good candidates'' for a balanced metric, in~the radial situation).
In~this subsection, we~want to discuss the differential equation \eqref{tWD} in more detail.

For~simplicity, we~treat again in detail only the simplest case of $n=2$, so~that the
equation reads
$$ W[f]\equiv tf'(ff'+tff''-tf^{\prime2})=1.  $$
Differentiating the expression
$$ \Psi(t):=-\frac t{f(t)^3}+\frac{t^2 f'(t)^2}{2f(t)^2} - \frac{t^3 f'(t)^3}{f(t)^3}, $$
we~see that
$$ \Psi'(t) = \frac{f(t)-3t f'(t)}{f(t)^4} \; (W[f](t)-1).  $$
Thus $W[f]\equiv1$ implies $\Psi\equiv c$ for some constant~$c$.

The~equation $x^3+\frac12 x^2=a$ has for each $a\ge0$ a unique nonnegative root $x=:\rho(a)\ge0$.
From $\Psi\equiv c$ we thus obtain
\begin{equation}
 f'(t) = -\frac{f(t)}t \rho\Big(c+\frac t{f(t)^3}\Big)   \label{hv}
\end{equation}
as long as
\begin{equation}
 c+\frac t{f^3}\ge0. \label{hv2}
\end{equation}
The condition $f(1)=0$ means that this will be fulfilled as $t\nearrow1$,
and by standard existence theorems (Pe\'ano --- note that since $\rho(a)\approx a^{1/3}$
when $a\to+\infty$, the function $(t,y)\mapsto-\frac yt \rho(c+t y^{-3})$ is continuous
near $(t,y)=(1,0)$) applied to \eqref{hv} yield a unique solution $f(t)$ for $t\in(t_0,1)$
with some $0<t_0<1$. As~long as $\rho>0$, we~will have $f'<0$, so $f$ will be decreasing
and, hence, positive. When $c\ge0$, it~is possible to continue in this way down to $t=0$
(\eqref{hv2}~will still be fulfilled), and for $t\to0+$ we will have $f'(t)\approx -\rho(c) f(t)/t$,
or $f(t)\approx A t^{-\rho(c)}$. When $c<0$, the~solution reaches for some $t_0\in(0,1)$
the situation when $c+\frac t{f(t)^3}=0$, whence $f'(t_0)=0<f(t_0)$; from~$W[f](t)=0$ we then
obtain $f''(t_0^+)=-\infty$, so $f$ develops a singularity at $t=t_0$ and the solution
terminates there (cf.~also Remark~\ref{R11} below).

Altogether, we~thus arrive at a family~$f_c(t)$, $c\ge0$, of~Poincar\'e metrics on~$\MM$,
smooth up to the outer boundary $|z|=1$.

\begin{remark} The constant $c$ appears in the Taylor expansion of $f$ at 1, but only
in the fourth derivative: namely, taking the Taylor expansion of $\Psi(t)-c=0$ at $t=1$
yields in turn
$$ f(1)=0, \quad f'(1)=-1, \quad f''(1)=\tfrac12, \quad f'''(1)=-\tfrac34,
 \quad f''''(1)=\tfrac{15+16c}8 .  $$
The~condition on the solution $f$ to reach as far as $t=0$ thus is $f''''(1)\ge\tfrac{15}8$.
\qed   \end{remark}

\begin{remark} The~only explicit solution we know is $f(t)=2-2\sqrt t$, corresponding to $c=0$.
For~general~$n$, an~explicit solution to \eqref{tWD} is $g(t)=\frac n{n-1}(1-t^{(n-1)/n})$.
\qed   \end{remark}

\begin{remark} For~$c\ge0$, from $f_c\sim t^{-\rc}$ as $t\searrow0$ we see that
the volume density $g=u^{-3}J[u]\sim t^{3\rc}$ is nonvanishing at the origin only
for $c=0$ (and then we know explicitly that $f(t)=2-2\sqrt t$, by~the preceding remark).
For~$c>0$, observing that $\rho$ is $C^\infty$ in a neighborhood of~$c$ and writing
$f(t)=t^{-\rc}h(t)$ with $h(0)\neq0$, we~get $\frac{tf'}f=\frac{th'}h-\rc$ so~that
\eqref{hv} becomes
$$ \frac{th'}h=\rc-\rho\Big(c+\frac{t^{3\rc+1}}{h^3}\Big)\approx
 -\rho'(c)\frac{t^{3\rc+1}}{h^3} = -\frac{t^{3\rc+1}}{h^3\rc(3\rc+1)} , $$
since $\rho'=1/(3\rho^2+\rho)$. Solving for $h$ gives
$$ h\approx h(0)-\frac{t^{3\rc+1}}{h(0)^2\rc(3\rc+1)^2}.  $$
Continuing in this fashion ultimately yields
$$ f(t)=t^{-\rc} Q(t^{3\rc+1})  $$
for some function $Q\in C^\infty[0,1]$. This gives a complete description of
the Poincar\'e metric corresponding to $c>0$ in the neighborhood of the
singularity at the origin.

Note that, by~a~similar argument as in the preceding subsection, for any $c\ge0$
the corresponding Poincar\'e metric is incomplete at the origin $z=0$.
In~fact, the integrand in~\eqref{ddagger} behaves for $x\searrow0$ as $x^{3\rc}$
for $c>0$, and as $x^{-1/2}$ for $c=0$, hence the integral is always finite. \qed
\end{remark}

\begin{remark} \label{R11}
For~$c<0$, we~have seen that as $t$ decreases from~1, we~reach at
some $t=t_0$ the situation when $t_0>0$, $f(t_0)>0$ but $c+\frac{t_0}{f(t_0)^3}=0$,
whence $f'(t_0)=0$ and $f''(t_0^+)=-\infty$.
A~similar analysis as in the preceding remark, starting from the observation that
$$ \rho(x)\approx \sqrt{2x}-2x+5\sqrt2 x^{3/2}-32x^2+\dots $$
is~a smooth function of $\sqrt x$ at the origin, shows that
$$ f(t)=Q(\sqrt{t-t_0})   $$
for some $Q\in C^\infty[0,\sqrt{1-t_0}]$, with
$$ Q(0)=f(t_0), \;Q'(0)=Q''(0)=0 \quad\text{and}\quad Q'''(0)=\frac{4\sqrt2}{t_0\sqrt{f(t_0)}}.$$
In~particular, as~$t\searrow t_0$,
$$ f'(t)\approx -\frac{\sqrt{2(t-t_0)}}{t_0\sqrt{f(t_0)}}, \qquad
 f''(t)\approx -\frac1{t_0} \sqrt{\frac{t-t_0}{2f(t_0)}} .   $$
In~particular, the~solution $f$~cannot be continued in any way across $t=t_0$
--- it~would have to assume imaginary values for $t<t_0$.
(The~same~is, of~course, true for $c=0$ and $t_0=0$, when $f(t)=2-2\sqrt t$.) \qed
\end{remark}

\subsection{Erratum}\label{sec53}
We~conclude by giving a fix for a small overlook in the proof of Theorem~3 in~\cite{EbD}:
the~argument treating the case $N=1$ after (42) there exhibits a contradiction by
taking $j=M-1$, where $j\in\{1,\dots,M\}$, $M\ge1$. This is fine for $M\ge2$,
but makes no sense for $M=1$.

To~handle the overlooked case of $N=1$, $M=1$, we~make the computations
there in greater detail: namely, starting again with (as~before, the~dots always
denote lower order terms)
\begin{equation}
 f= L(1+b L\log L+aL+\dots), \qquad b\neq0,   \label{tWG}
\end{equation}
gives, in~turn, in~the notations of~\cite{EbD} ($C$~is the Euler constant)
\begin{align*}
w &\equiv u^2\partial\dbar\log\frac1u = 1+2bL\log L+(2a-b+1)L+\dots,  \\
c_k &= \frac1{k+1}-\frac{2b}{(k+1)^2}\log(k+1) + \frac{2a+b+1-2bC}{(k+1)^2}+\dots, \\
\frac1{c_k} &= (k+1) \Big[1-\frac{2b}{k+1}\log(k+1) + \frac{2Cb-2a-b-1}{(k+1)^2} + \dots\Big],  \\
F &= \frac1{L^2} \Big[1-2b L\log L - (2a+b)L + \dots \Big] ,
\end{align*}
and
$$ f^2F = 1 - bL + \dots .  $$
The~condition that $f^2F-c\pi$ is smooth up to $t=1$ and vanishes to second
order there thus implies that ($c=\frac1\pi$~and) $b=0$, contradicting the
hypothesis $b\neq0$ in~\eqref{tWG}. This completes the proof.

% \section*{Acknowledgement} The author thanks the anonymous referee for his
% comments on the original version of this manuscript.

% \section*{Funding} Research supported by GA \v CR grant no.~16-25995S and
% by~RVO funding for I\v{C} 67985840.

\end{document}